\providecommand{\U}[1]{\protect\rule{.1in}{.1in}}
\newtheorem{theorem}{Theorem}[section]
\newtheorem{corollary}[theorem]{Corollary}
\newtheorem{definition}[theorem]{Definition}
\newtheorem{lemma}[theorem]{Lemma}
\newtheorem{proposition}[theorem]{Proposition}
\newtheorem{remark}[theorem]{Remark}
\newenvironment{proof}[1][Proof]{\textbf{#1.} }{\ \rule{0.5em}{0.5em}}
\begin{document}

\title{Matching subspaces in a field extension: an update}
\date{}
\author{Shalom Eliahou\thanks{%
eliahou@lmpa.univ-littoral.fr} and C\'{e}dric Lecouvey\thanks{%
lecouvey@lmpa.univ-littoral.fr} \\
LMPA Joseph Liouville, FR CNRS 2956\\
ULCO, B.P. 699, F-62228 Calais cedex\\
Univ Lille Nord de France, F-59000 Lille, France}
\maketitle

\begin{abstract}
In this paper, we formulate and prove linear analogues of results concerning
matchings in groups. A matching in a group $G$ is a bijection $\varphi $
between two finite subsets $A,B$ of $G$ with the property, motivated by old
questions on symmetric tensors, that $a\varphi (a)\notin A$ for all $a\in A$%
. Necessary and sufficient conditions on $G$, ensuring the existence of
matchings under appropriate hypotheses, are known. Here we consider a
similar question in a linear setting. Given a skew field extension $K\subset
L$, where $K$ commutative and central in $L$, we introduce analogous notions
of matchings between finite-dimensional $K$-subspaces $A,B$ of $L$, and
obtain existence criteria similar to those in the group setting. Our tools
mix additive number theory, combinatorics and algebra. The present version corrects a slight gap in the statement of Theorem~\ref{thm:matching property} of the published version of this paper.
\end{abstract}

\noindent \textbf{Keywords.} Linear Matchings; Additive combinatorics;
Systems of distinct representatives; Hall theorem; Matroids; Free
transversals.

\section{Introduction}

\label{matchgroup}

Throughout the paper, we shall say \textit{field} for a skew field or
division ring, and \textit{commutative field} for a field in which the
product is commutative.

Let $G$ be a group, written multiplicatively. Let $A,B \subset G$ be
nonempty finite subsets of $G$. A \textit{matching} from $A$ to $B$ is a map 
$\varphi :A\rightarrow B$ which is bijective and satisfies the condition 
\begin{equation*}
a\varphi(a)\notin A 
\end{equation*}
for all $a\in A$. This notion was introduced in \cite{FanLos} by Fan and
Losonczy, who used matchings in $\mathbb{Z}^{n}$ as a tool for studying an
old problem of Wakeford concerning canonical forms for symmetric tensors 
\cite{Wak}. Obvious necessary conditions for the existence of a matching
from $A$ to $B$ are $|A|=|B|$ and $1\notin B$. The group $G$ is said to have
the \textit{matching property} if these conditions on $A,B$ suffice to
guarantee the existence of a matching from $A$ to $B$. What groups possess
the matching property, and when are there automatchings from $B$ to $B$? The
following answers were first obtained by Losonczy \cite{LOs} in the abelian
case, and then extended to arbitrary groups in \cite{EL1}.

\begin{theorem}
\label{TH_matchAB}Let $G$ be group. Then $G$ has the matching property if
and only if $G$ is torsion-free or cyclic of prime order.
\end{theorem}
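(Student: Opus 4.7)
The plan is to treat the two implications separately: for $(\Rightarrow)$, exhibit an explicit pair of sets with no matching; for $(\Leftarrow)$, verify Hall's marriage condition.

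For $(\Rightarrow)$, suppose $G$ is neither torsion-free nor cyclic of prime order. Since $G$ has torsion, any torsion element generates a finite cyclic subgroup, inside which Cauchy's theorem produces a subgroup $H \leq G$ of some prime order $p$. Because $G$ is not cyclic of prime order, $G \neq H$, so some $x \in G \setminus H$ exists. The idea is to take $A = H$ and $B = (H \setminus \{1\}) \cup \{x\}$, so that $|A| = |B| = p$ and $1 \notin B$. For any bijection $\varphi : A \to B$, exactly one element of $A$ is mapped to $x$; every other $a \in A$ satisfies $\varphi(a) \in H \setminus \{1\} \subset H$, so $a\varphi(a) \in H = A$. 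Since $p \geq 2$, at least one such $a$ exists, so no matching can exist.

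For $(\Leftarrow)$, given $A, B \subset G$ with $|A| = |B|$ and $1 \notin B$, the plan is to form the bipartite graph on $A \sqcup B$ whose edges are the pairs $(a,b)$ satisfying $ab \notin A$, and to apply Hall's marriage theorem. Concretely, I would verify that for every $T \subset B$ the set $N(T) = \{a \in A : ab \notin A \text{ for some } b \in T\}$ satisfies $|N(T)| \geq |T|$, equivalently, that no pair $S \subset A$, $T \subset B$ with $|S| + |T| > |A|$ and $ST \subset A$ exists. When $G = \mathbb{Z}/p$, Hall's condition follows from the Cauchy--Davenport inequality together with the fact that every nonzero element generates the whole group. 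When $G$ is torsion-free, the elementary input is that $Ab \neq A$ for any nonempty finite $A$ and any $b \neq 1$, since the $\langle b \rangle$-orbit of any $a \in A$ under right multiplication is infinite and cannot fit inside $A$.

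The main obstacle will be promoting this local statement to the full Hall condition in the torsion-free case. I would proceed by contradiction: assume a forbidden pair $(S, T)$ with $1 \notin T$, $|S| + |T| > |A|$, and $ST \subset A$, then exploit an additive-combinatorial estimate on $|ST|$ tailored to torsion-free groups to contradict $ST \subset A$ or to produce an infinite substructure inside a finite set. This is the technically delicate step; the constructions elsewhere in the argument are transparent by comparison.
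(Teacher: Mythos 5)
Your forward direction is complete and correct: taking $A=H$ a subgroup of prime order $p$ and $B=(H\setminus\{1\})\cup\{x\}$ with $x\notin H$ is exactly the standard obstruction, and it is also the template for the counterexample used in this paper's linear setting ($A=K(a)$, $B=\langle a,\dots,a^{n-1},x\rangle$ in the proof of Theorem~\ref{thm:matching property}). Your reduction of the converse to Hall's marriage theorem, in the form ``there is no pair of nonempty sets $S\subset A$, $T\subset B$ with $ST\subset A$ and $|S|+|T|>|A|$,'' is likewise the right move and is the route taken in \cite{LOs} and \cite{EL1}, mirrored here by Proposition~\ref{thm:dim}.

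The gap is precisely the step you flag as ``technically delicate,'' and it does not close with the estimates you name. Applying a sumset bound to $S$ and $T$ themselves --- Cauchy--Davenport in $\mathbb{Z}/p\mathbb{Z}$, or Kemperman's $|ST|\ge |S|+|T|-1$ for finite nonempty subsets of a torsion-free group --- yields only $|A|\ge |ST|\ge |S|+|T|-1$, i.e.\ $|S|+|T|\le |A|+1$, which is consistent with the assumed $|S|+|T|\ge |A|+1$: you land exactly one short of a contradiction, and the borderline case $ST=A$ is not absurd on its face. The missing idea is to enlarge $T$ to $T'=T\cup\{1\}$. Since $T\subset B$ and $1\notin B$, one has $|T'|=|T|+1$; and since $S\subset A$, one still has $ST'=S\cup ST\subset A$. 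The same bound applied to $S$ and $T'$ now gives $|A|\ge |ST'|\ge |S|+|T'|-1=|S|+|T|$, the desired contradiction. (In $\mathbb{Z}/p\mathbb{Z}$ the degenerate alternative $|ST'|\ge p$ forces $A=G$, hence $B=G\ni 1$, which is excluded.) This is the only place the hypothesis $1\notin B$ enters the hard direction, and it is the exact group-theoretic analogue of the step $W_J=K\oplus V_J$ in the paper's proof of Theorem~\ref{thm:matching property}. Your observation that $Ab\neq A$ for $b\neq 1$ is the case $|T|=1$ of this and does not by itself propagate to larger $T$.
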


\begin{theorem}
\label{TH_matchA}Let $G$ be a group. Let $B$ be a nonempty finite subset of $%
G$. Then there is a matching from $B$ to $B$ if and only if $1\notin B$.
\end{theorem}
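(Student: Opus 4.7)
The only-if direction is immediate: if $1 \in B$ and $\varphi : B \to B$ is a matching, then $1 \cdot \varphi(1) = \varphi(1) \in B$, contradicting the matching condition. For the converse, assume $1 \notin B$ and build the bipartite graph $\Gamma$ on two copies of $B$, with $b$ joined to $b'$ precisely when $bb' \notin B$. A matching from $B$ to $B$ is then exactly a perfect matching of $\Gamma$, so by Hall's marriage theorem it suffices to verify $|N(S)| \geq |S|$ for every nonempty $S \subseteq B$, where $N(S)$ denotes the neighborhood of $S$ in $\Gamma$.

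Note that $B \setminus N(S) = \{b \in B : Sb \subseteq B\}$. Writing $T(S)$ for this set and abbreviating $T = T(S)$, Hall's condition reads $|S| + |T| \leq |B|$. Suppose for contradiction that $|S| + |T| > |B|$ for some $S$. Since $S, T \subseteq B$, we have $|S \cap T| \geq |S| + |T| - |B| > 0$, so we may pick $g \in S \cap T$. Then $Sg \subseteq B$ (because $g \in T$) and $g \in S$ together give $g^{2} \in B$. The goal is to bootstrap this into $g^{n} \in B$ for every $n \geq 1$: the distinct positive powers of $g$ would then all lie in the finite set $B$, forcing $g$ to have finite order $d$, whence $g^{d} = 1 \in B$ would contradict the assumption.

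For the bootstrap I would take $S$ of minimum cardinality among violators of Hall's condition. Minimality forces the rigid identities $|T| = |B| - |S| + 1$ and $T(S \setminus \{s\}) = T$ for every $s \in S$. In particular $|S| = 1$ is already impossible: it would force $T = B$, hence $gB \subseteq B$, hence $g^{n} \in B$ for all $n$, yielding the contradiction described above. The main obstacle is handling $|S| \geq 2$, where one must propagate $g^{2} \in B$ through a careful exchange between $S$ and the translate $Sg$ inside $B$, exploiting $ST \subseteq B$ and the rigidity of $T$. In the abelian case the propagation can be short-circuited by Kneser's addition theorem, which furnishes a subgroup $H \leq G$ stabilizing $S + T$; combined with $0 \notin S + T \subseteq B$ and the element $g \in S \cap T$, a brief check yields $0 \in S + T$, and hence the desired contradiction. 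Adapting such algebraic input to an arbitrary, possibly noncommutative, group is where the technical heart of the proof lies.
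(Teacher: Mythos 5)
Your reduction is the right one, and the first half of the argument is fine: the only-if direction is correct, and encoding the problem as a perfect matching in the bipartite graph joining $b$ to $b'$ when $bb'\notin B$ turns Hall's condition into the inequality $|S|+|T(S)|\le |B|$ for all nonempty $S\subseteq B$, where $T(S)=\{b\in B : Sb\subseteq B\}$. This is exactly the skeleton used in this paper for the linear analogue (Theorem~\ref{th-match1}, via Proposition~\ref{thm:dim} and Theorem~\ref{TH_Lin8hall}). But your proof stops where the real work starts. The bound $|S|+|T(S)|\le|B|$ is an additive theorem, and the minimal-counterexample bootstrap you sketch does not deliver it: you only complete the case $|S|=1$, you explicitly defer $|S|\ge 2$ to a propagation step that is never carried out, and your fallback (Kneser's theorem) covers only abelian $G$, whereas the statement is for arbitrary groups. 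As written, the argument is incomplete precisely at its technical heart.

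The missing ingredient is Kemperman's theorem \cite{Kem}: if $X,Y$ are finite nonempty subsets of a group and some $c\in XY$ admits a unique representation $c=xy$ with $x\in X$, $y\in Y$, then $|XY|\ge |X|+|Y|-1$. Apply it with $X=S\cup\{1\}$, $Y=T\cup\{1\}$ and $c=1$. The representation $1=1\cdot 1$ is unique: $s\cdot 1=1$ or $1\cdot t=1$ would force $1\in S\cup T\subseteq B$, and $st=1$ with $s\in S$, $t\in T$ would put $1\in ST\subseteq B$; all of these are excluded by $1\notin B$. Hence $|XY|\ge |S|+|T|+1$, while $XY=\{1\}\cup S\cup T\cup ST\subseteq\{1\}\cup B$, so $|S|+|T|\le|B|$ and Hall's condition holds. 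Note that this is precisely the group-theoretic counterpart of Corollary~\ref{corKem}, whose proof follows the same pattern with $K\oplus U$ and $K\oplus V$ in place of $S\cup\{1\}$ and $T\cup\{1\}$; it is also how the theorem is established in \cite{EL1}, to which the present paper refers for the proof. With that one lemma inserted, your Hall-based framework closes up and the rest of your write-up stands.
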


Theorem~\ref{TH_matchAB} and~\ref{TH_matchA} were established using methods
and tools pertaining to additive number theory and combinatorics.
Specifically, the additive tools used are lower bounds on the size of the
product set 
\begin{equation*}
AB = \{ab \mid a \in A, \; b \in B\}
\end{equation*}
in $G$, and the main combinatorial tool is Hall's marriage theorem. See also 
\cite{Hamidoune} for more results on matchings in groups.

\medskip

Now, various additive theorems bounding $|AB|$ have recently been transposed
to a linear setting, in the following sense. Given a field extension $K
\subset L$ and finite-dimensional $K$-subspaces $A,B$ of $L$, analogous
lower bounds on the dimension of $\langle AB \rangle$ were established,
where $\langle AB \rangle$ is the $K$-subspace spanned by the product set $AB
$ in $L$. See \cite{Xian, Hou, EL2, ELK}. Suitable hypotheses on the
extension may be needed, such as commutativity or separability. Two main
results in \cite{EL2}, which play a key role here, only require $K$ to be
commutative and central in $L$.

The purpose of this paper is to show that Theorem~\ref{TH_matchAB} and~\ref%
{TH_matchA} also admit linear analogues in a field extension $K \subset L$.
As in \cite{EL2}, we only assume that $K$ is commutative and central in $L$.
In Section~\ref{defs}, we introduce a specific notion of matching bases of
finite-dimensional subspaces $A,B$ of $L$, and state the main results of the
paper, namely Theorem~\ref{thm:matching property} and \ref{thm:automatching}%
. They are analogous to Theorem~\ref{TH_matchAB} and~\ref{TH_matchA}, and
give existence criteria for such matchings. The possibility of matching a
given basis of $A$ to some basis of $B$ is reformulated in Section~\ref{dims}%
, in terms of suitable dimension estimates. In the process, we use a linear
version of Hall's marriage theorem, derived from a more general theorem of
Rado on the existence of independent transversals in matroids. Section~\ref%
{linearized} presents the linear versions in \cite{EL2} of results in
additive number theory, that will allow us to deal with the required
dimension estimates of the preceding section. This is achieved in Section~%
\ref{proofs}, where Theorem~\ref{thm:matching property} and \ref%
{thm:automatching} are finally proved. In the last section, we introduce and
study a related notion of strong matching between subspaces of $L$.

\section{Definitions and main results}

\label{defs}

Throughout the paper, we shall consider a field extension $K \subset L$,
where $K$ is commutative and \textit{central} in $L$, i.e. such that $%
\lambda x=x\lambda$ for all $\lambda \in K, x \in L$. Let $A,B \subset L$ be
finite-dimensional $K$-subspaces of $L$. Ideally, a matching from $A$ to $B$
would be an isomorphism $\varphi : A \rightarrow B$ such that $a\varphi(a)
\notin A$ for all non-zero $a \in A$. However, we need to introduce somewhat
subtler requirements in order to obtain existence criteria analogous to
those of Theorem~\ref{TH_matchAB} and~\ref{TH_matchA}.

\smallskip

To start with, observe that if $0 \not=a \in A$ and $b \in B$, then 
\begin{equation*}
ab \notin A \; \Longleftrightarrow \; b \notin a^{-1}A \cap B.
\end{equation*}
This motivates the use of the subspace $a^{-1}A \cap B$ of $B$ in the
definition of matched bases below.

\begin{definition}
\label{def matched} Let $A,B$ be $n$-dimensional $K$-subspaces of the field
extension $L$. Let $\mathcal{A}= \{a_1,\ldots,a_n\}$, $\mathcal{B}%
=\{b_1,\ldots,b_n\}$ be bases of $A,B$ respectively. We say that $\mathcal{A}
$ is matched to $\mathcal{B}$ if 
\begin{equation*}
a_i b \in A \; \Longrightarrow \; b \in \langle b_1, \dots, \widehat{b_i},
\dots, b_n \rangle 
\end{equation*}
for all $b \in B$ and all $i = 1, \ldots, n$, where $\langle b_1, \dots, 
\widehat{b_i}, \dots, b_n \rangle$ is the hyperplane of $B$ spanned by the
set $\mathcal{B}\setminus \{b_i\}$; equivalently, if 
\begin{equation}  \label{match}
a_i^{-1}A \cap B \; \subset \; \langle b_1, \dots, \widehat{b_i}, \dots, b_n
\rangle
\end{equation}
for all $i = 1, \ldots, n$.
\end{definition}

\begin{remark}
If $\mathcal{A}$ is matched to $\mathcal{B}$ in the above sense, then it
follows that 
\begin{equation*}
a_ib_i \notin A, 
\end{equation*}
and hence $a_ib_i \notin \mathcal{A}$, for all $i = 1,\dots, n$. In
particular, the map $a_i \mapsto b_i$ is a matching, in the group setting
sense, from $\mathcal{A}$ to $\mathcal{B}$ within the multiplicative group $%
L^\ast$.
\end{remark}

\medskip

Moreover, we now show that if $\mathcal{A}$ is matched to $\mathcal{B}$,
then $B$ cannot contain 1. This necessary condition exactly mirrors the
corresponding one in the group setting.

\begin{lemma}
\label{1 notin B} Let $A,B$ be $n$-dimensional $K$-subspaces of the field
extension $L$. If a basis $\mathcal{A}$ of $A$ can be matched to a basis $%
\mathcal{B}$ of $B$, then $1 \notin B$.
\end{lemma}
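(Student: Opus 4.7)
The plan is to argue by contradiction: assume $1 \in B$ and derive that $1 = 0$ from the matching condition, which is absurd since we are working in a field.

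Concretely, I would first expand $1$ in the basis $\mathcal{B}$, writing $1 = \sum_{i=1}^n \lambda_i b_i$ with $\lambda_i \in K$. The key observation is then that for every $i$, the product $a_i \cdot 1 = a_i$ lies in $A$, so $1 \in a_i^{-1}A$; combined with $1 \in B$, this gives $1 \in a_i^{-1}A \cap B$. Applying the matching hypothesis~\eqref{match}, I would then conclude that $1$ lies in the hyperplane $\langle b_1, \dots, \widehat{b_i}, \dots, b_n \rangle$ for every $i = 1, \dots, n$.

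By uniqueness of the basis expansion of $1$ in $\mathcal{B}$, the condition $1 \in \langle b_1, \dots, \widehat{b_i}, \dots, b_n \rangle$ forces the coefficient $\lambda_i$ to vanish. Since this holds for all $i$, I obtain $1 = \sum_i \lambda_i b_i = 0$, contradicting $1 \neq 0$ in $L$.

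There is no serious obstacle here; the argument is a direct application of the definition. The only thing to be mildly careful about is that the intersection of all the coordinate hyperplanes of a basis is indeed $\{0\}$, which is immediate from the linear independence of $\mathcal{B}$.
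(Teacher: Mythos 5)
Your proof is correct and follows essentially the same route as the paper: both observe that $1\in B$ forces $1\in a_i^{-1}A\cap B$ for every $i$ (since $a_i\cdot 1=a_i\in A$), and then derive a contradiction from the fact that the coordinate hyperplanes of $\mathcal{B}$ intersect only in $\{0\}$. Your coordinate computation with the $\lambda_i$ is just an explicit unwinding of that last fact, so there is nothing to add.
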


\begin{proof}
Let $\mathcal{A}=\{a_1,\dots, a_n\}$, $\mathcal{B}=\{b_1,\dots, b_n\}$ be
bases of $A,B$ respectively. Assume on the contrary that $1 \in B$. Then we
have 
\begin{equation*}
1 \; \in \; \bigcap_{i=1}^n (a_i^{-1}A \cap B). 
\end{equation*}
On the other hand, it is clear that 
\begin{equation*}
\bigcap_{i=1}^n \langle b_1, \dots, \widehat{b_i}, \dots, b_n \rangle \; =
\; \{0\}. 
\end{equation*}
Therefore, the inclusion $a_i^{-1}A \cap B \,\subset \, \langle b_1, \dots, 
\widehat{b_i}, \dots, b_n \rangle$ required in (\ref{match}) cannot hold for
all $i=1,\ldots, n$, and hence $\mathcal{A}$ cannot be matched to $\mathcal{B%
}$.
\end{proof}

\bigskip

With the notion of matched bases at hand, we now introduce that of matched $K
$-subspaces of $L$.

\begin{definition}
Let $A,B$ be $n$-dimensional $K$-subspaces in the field extension $L$. We
say that $A$ is matched to $B$ if every basis $\mathcal{A}$ of $A$ can be
matched to a basis $\mathcal{B}$ of $B$.
\end{definition}

By the above lemma, if $A$ is matched to $B$, then $1 \notin B$. Conversely,
is the condition $1 \notin B$ sufficient to guarantee that any subspace $A$
of the same dimension as $B$ is matched to $B$? We shall see that the answer
depends on properties of the field extension $K \subset L$.

\begin{definition}
Let $K \subset L$ be a field extension. We say that $L$ has the linear
matching property if, for every $n \ge 1$ and every $n$-dimensional
subspaces $A,B$ of $L$ with $1 \notin B$, the subspace $A$ is matched to $B$.
\end{definition}

We shall prove the following results in Section~\ref{proofs}.

\begin{theorem}
\label{thm:matching property}Let $K\subset L$ be a field extension, with $K$
commutative and central in $L$. Then $L$ has the linear matching property if
and only if $L$ contains no proper finite-dimensional extension over $K$.%
\footnote{Our original statement of Theorem~\ref{thm:matching property} in the published version of this paper [Journal of Algebra 324 (2010) 3420-3430] was incorrect, as pointed out to us by Professors Akbari and Aliabadi. It mistakenly stated that the linear matching property was equivalent to $L$ being either transcendental or an extension of prime degree over $K$, thereby missing all finite-dimensional extensions of non-prime degree having no proper intermediate extensions. (See Remark~\ref{Rem_Gap}).}
\end{theorem}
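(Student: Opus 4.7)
I plan to split along the equivalence, handling the ``only if'' direction by a direct construction and the ``if'' direction by the linear Hall/Rado theorem of Section~\ref{dims} combined with a Vosper/Kneser-type linear inequality from \cite{EL2} (as reviewed in Section~\ref{linearized}).

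For the ``only if'' direction I prove the contrapositive. Assume some $F$ with $K \subsetneq F \subsetneq L$ satisfies $n := \dim_K F < \infty$. Take $A := F$, so that $a^{-1}A = F$ for every nonzero $a \in A$. Choose an $(n-1)$-dimensional $K$-subspace $U \subset F$ with $1 \notin U$ (possible since $n \ge 2$), pick $y \in L \setminus F$, and set $B := U + Ky$. Direct verification yields $\dim B = n$, $1 \notin B$, and $F \cap B = U$. For any basis $\{a_1,\dots,a_n\}$ of $A$ the subspaces $V_i := a_i^{-1}A \cap B$ all coincide with $U$, so $\bigcap_i V_i = U \ne \{0\}$. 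Since $\bigcap_i \vs{b_1,\dots,\widehat{b_i},\dots,b_n} = \{0\}$ for every basis $\{b_1,\dots,b_n\}$ of $B$, the inclusions~(\ref{match}) cannot hold simultaneously for all $i$, and $A$ fails to be matched to $B$.

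For the ``if'' direction, assume $L$ has no proper finite-dimensional intermediate subfield over $K$. Let $A,B$ be $n$-dimensional with $1 \notin B$, fix any basis $\mathcal{A} = \{a_1,\dots,a_n\}$ of $A$, and set $V_i := a_i^{-1}A \cap B$. By duality, a basis $\{b_1,\dots,b_n\}$ of $B$ satisfies~(\ref{match}) iff its dual basis $\{b_i^\ast\} \subset B^\ast$ lies componentwise in $V_i^\perp$, so the problem reduces to finding $n$ linearly independent functionals $f_i \in V_i^\perp$. By the Rado/linear-Hall theorem of Section~\ref{dims}, such $f_i$ exist iff
\[
\dim \bigcap_{i \in I} V_i \;\le\; n - |I| \qquad \text{for every nonempty } I \subset \{1,\dots,n\}.
\]
Fix such an $I$, set $W := \bigcap_{i \in I} V_i$ and $A_I := \vs{a_i : i \in I}$ (so $\dim A_I = |I|$). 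From $W \subset V_i$ we get $a_i W \subset A$ for each $i \in I$, hence $\vs{A_I W} \subset A$ and $\dim \vs{A_I W} \le n$. Applying a linear Vosper/Kneser-type inequality from \cite{EL2} to the pair $(A_I, W)$ should yield $\dim \vs{A_I W} \ge |I| + \dim W$, which combined with the previous bound gives $\dim W \le n - |I|$, as required.

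The hard step is the last one: ruling out the exceptional regime of the Vosper-type inequality, in which the lower bound drops to $|I| + \dim W - 1$. In the linear setting this regime forces $\vs{A_I W}$ to sit inside a coset-like configuration governed by a nontrivial finite-dimensional sub-skew-field $F$ of $L$ containing $K$. Under our hypothesis such an $F$ can only equal $K$ (which falls outside the exceptional case) or $L$; the latter must be excluded using $1 \notin B \supset W$, which should prevent $W$ from being the translate of a full hyperplane of $L$. Pinpointing the precise statement in \cite{EL2} to invoke, and verifying that our two hypotheses---no proper finite-dimensional intermediate subfield and $1 \notin B$---jointly exclude the exceptional case, is the delicate heart of Section~\ref{proofs}.
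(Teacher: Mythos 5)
Your ``only if'' direction is correct and is essentially the paper's own argument: the paper takes $A=K(a)$ and $B=\langle a,\ldots,a^{n-1},x\rangle$ with $x\notin K(a)$, while you take $A=F$ and $B=U+Ky$; in both cases the point is that $a_i^{-1}A\cap B=F\cap B$ is a fixed $(n-1)$-dimensional subspace independent of $i$, so the estimate of Proposition~\ref{thm:dim} fails for $J=\{1,\ldots,n\}$. That half is fine.

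The genuine gap is in the ``if'' direction, exactly at the step you yourself flag as ``the delicate heart.'' There is no linear Vosper-type theorem in \cite{EL2} to invoke, and the exclusion of an ``exceptional regime'' along the lines you sketch is not how the argument goes; a Kemperman/Kneser-type bound applied directly to the pair $(A_I,W)$ can only yield $\dim\langle A_I W\rangle\ge |I|+\dim W-\dim M\le |I|+\dim W-1$, one short of what you need, and the hypothesis $1\notin B$ does not by itself rule out equality in such a bound. The paper's mechanism for gaining the missing $+1$ is different and much more elementary: since $1\notin B$ one has $K\cap V_J=\{0\}$, so one may set $W_J:=K\oplus V_J$, which still satisfies $A_JW_J\subset A$ (because $A_JK=A_J\subset A$) but has dimension $\dim V_J+1$. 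The linear Olson theorem (Theorem~\ref{TH-Ol8lin}) applied to $(A_J,W_J)$ gives a subfield $K\subset M\subset L$ and a subspace $T\subset\langle A_JW_J\rangle\subset A$ with $MT=T$ or $TM=T$ and $\dim\langle A_JW_J\rangle\ge |J|+\dim V_J+1-\dim M$. The standing hypothesis forces $M=K$ or $M=L$, and $M=L$ is excluded because it would give $T=L\subset A$, hence $A=L=B$, contradicting $1\notin B$ (note that this, and not any statement about $W$ being ``the translate of a full hyperplane,'' is where $1\notin B$ enters for the second time). With $\dim M=1$ the inequality becomes exactly $\dim V_J\le n-|J|$. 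In short: the needed inequality comes from adjoining $K$ to $V_J$ and applying the linear Olson theorem, not from upgrading Kneser to Vosper; as written, your plan does not close.
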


\begin{corollary}
Let $L$ be a commutative finite-dimensional Galois extension of the
(commutative) field $K$. Then $L$ has the linear matching property if and
only if $L$ is an extension of $K$ of prime degree.
\end{corollary}

In contrast, no special hypothesis on $L$ is needed to guarantee that any $n$%
-dimensional subspace $B$ avoiding 1 is matched to itself.

\begin{theorem}
\label{thm:automatching} Let $K \subset L$ be a field extension, with $K$
commutative and central in $L$. Let $B$ be a finite-dimensional subspace of $%
L$. Then $B$ is matched to itself if and only if $1\notin B$.
\end{theorem}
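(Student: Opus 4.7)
The plan is to prove each direction of the equivalence separately. Necessity of $1 \notin B$ is immediate: Lemma~\ref{1 notin B} applied with $A = B$ says that if some basis of $B$ can be matched to some basis of $B$, then $1 \notin B$.

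For sufficiency, assume $1 \notin B$ and fix an arbitrary basis $\mathcal{A} = \{a_1,\dots,a_n\}$ of $B$. By the linear Hall/Rado reformulation worked out in Section~\ref{dims}, the existence of a matching basis $\mathcal{B}$ of $B$ for $\mathcal{A}$ is equivalent to the family of Hall-type dimension inequalities
\[
\dim \bigcap_{i \in I}\bigl(a_i^{-1}B \cap B\bigr) \;\leq\; n - |I|
\]
for every nonempty $I \subseteq \{1,\dots,n\}$. Writing $V_I = \langle a_i : i \in I\rangle_K$ (of dimension $|I|$) and $W_I = \{x \in B : V_I x \subset B\}$, the task reduces to proving $\dim V_I + \dim W_I \leq n$, and this is where the additive-combinatorial tools of Section~\ref{linearized} come in.

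The key move is a thickening trick that hinges on $1 \notin B$. Since $V_I \subset B$ and $1 \notin B$, the sum $V_I' := K + V_I$ is direct, with $\dim V_I' = |I|+1$, and
\[
V_I' \cdot W_I \;=\; W_I + V_I W_I \;\subset\; B,
\]
whence $\dim \langle V_I' W_I \rangle \leq n$. I then apply the linear Kneser/Cauchy--Davenport inequality from Section~\ref{linearized} to the pair $(V_I', W_I)$; in its sharpest form it produces a subfield $H$ of $L$ containing $K$, stabilizing $\langle V_I' W_I \rangle$, and satisfying $\dim \langle V_I' W_I \rangle \geq \dim V_I' + \dim W_I - \dim H$. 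When $H = K$, the inequality collapses to $|I| + \dim W_I \leq n$, exactly what is required.

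The main obstacle will be the residual case $[H:K] \geq 2$, where the Kneser bound is short by $\dim H - 1$. To close the gap I plan to exploit that $1 \in V_I'$ forces $W_I \subset \langle V_I' W_I \rangle$, so that $H$-stability yields $HW_I \subset B$; this enlarges $W_I$ to an $H$-stable subspace of $B$ which, since $1 \notin B$, cannot contain $1$. As $K$ is commutative and central in $L$, $H$ is a genuine intermediate subfield, so $\dim HW_I$ is a multiple of $\dim H$ and $HW_I$ still sits inside $B$. I expect that a dimension count comparing $\dim H$, $\dim HW_I$ and $\dim B$, combined with an iterative descent replacing $W_I$ by its $H$-saturation, either contradicts $1 \notin B$ or reduces to the trivial-stabilizer case already handled, thereby completing the proof.
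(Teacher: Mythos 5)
Your necessity argument and your reduction are exactly the paper's: Lemma~\ref{1 notin B} with $A=B$, then Proposition~\ref{thm:dim} to turn the problem into the Hall-type bounds $\dim V_I + \dim W_I \le n$, then the thickening $V_I' = K \oplus V_I$ (direct because $1 \notin B$) with $\langle V_I' W_I\rangle \subset B$. The gap is in your choice of additive tool. You invoke the Olson-type Theorem~\ref{TH-Ol8lin}, which carries a stabilizing subfield $H$, and your plan for the residual case $[H:K]\ge 2$ does not work as sketched: first, that theorem does not assert that $H$ stabilizes $\langle V_I'W_I\rangle$, only that it stabilizes some subspace $S \subset \langle V_I'W_I\rangle$ of dimension at least $\dim V_I' + \dim W_I - \dim H$, so you cannot conclude $HW_I \subset B$ from $W_I \subset \langle V_I'W_I\rangle$; second, even if you could produce an $H$-submodule of $B$ with $H \supsetneq K$, that yields no contradiction with $1 \notin B$ (e.g.\ $Hx$ for $x \notin H$ need not contain $1$). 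Since Theorem~\ref{thm:automatching} makes no hypothesis on intermediate subfields of $K \subset L$, there is no way to rule out $H \supsetneq K$ along your route; this is precisely the distinction between this theorem and Theorem~\ref{thm:matching property}, whose proof does use Olson but needs the no-proper-intermediate-extension hypothesis to force $M=K$.

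The missing idea is to thicken \emph{both} factors and use the Kemperman-type result instead, which has no stabilizer in its conclusion. Concretely, the paper applies Corollary~\ref{corKem} (a consequence of Theorem~\ref{th_AB}) with $U = V_I$, $V = W_I$, $X = B$: since $1 \notin B$ forces $K \cap B = \{0\}$, and $V_I$, $W_I$, $V_I W_I$ all lie in $B$, the corollary gives $\dim B \ge \dim V_I + \dim W_I$ outright, i.e.\ $|I| + \dim W_I \le n$, with no residual case. Replacing your appeal to Theorem~\ref{TH-Ol8lin} by an appeal to Corollary~\ref{corKem} repairs the proof and in fact reproduces the paper's argument.
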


The proofs of these results involve delicate linearized versions, obtained
in \cite{EL2} and recalled in Section~\ref{linearized}, of classical
addition theorems due to Kemperman and Olson.

\section{Dimension criteria for matchable bases}

\label{dims} Let $K \subset L$ be a field extension, with $K$ commutative
and central in $L$, and let $A,B \subset L$ be $n$-dimensional $K$-subspaces
of $L$. In this section, we reformulate the property of a basis $\mathcal{A}$
of $A$ to be matchable to some basis of $B$, in terms of suitable dimension
estimates.

\begin{proposition}
\label{thm:dim} Let $\mathcal{A}=\{a_1,\ldots,a_n\}$ be a basis of $A$. Then 
$\mathcal{A}$ can be matched to a basis of $B$ if and only if, for all $J
\subset \{1,\ldots, n\}$, we have 
\begin{equation}  \label{eq:dim estimates}
\dim \bigcap_{i \in J} (a_i^{-1}A \cap B) \; \le \; n-|J|.
\end{equation}
\end{proposition}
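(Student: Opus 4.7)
The plan is to reformulate the matching condition by dualizing inside $B$, and then invoke the linear version of Hall's marriage theorem (as a special case of Rado's theorem on independent transversals) that the authors mention in Section~\ref{matchgroup}. For brevity, set $V_i := a_i^{-1}A \cap B$ for $i=1,\dots,n$, so condition~(\ref{match}) becomes $V_i \subset \langle b_1,\dots,\widehat{b_i},\dots,b_n\rangle$.

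The \emph{only if} direction is immediate. Given a basis $\{b_1,\dots,b_n\}$ of $B$ to which $\mathcal{A}$ is matched, for any $J \subset \{1,\dots,n\}$ one has
$\bigcap_{i \in J} V_i \subset \bigcap_{i \in J} \langle b_j : j \neq i\rangle = \langle b_j : j \notin J\rangle$,
a subspace of dimension $n-|J|$, which yields~(\ref{eq:dim estimates}).

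For the \emph{if} direction, I would work in the $K$-dual $B^*$ of $B$. Let $W_i := V_i^{\perp} \subset B^*$ be the annihilator of $V_i$. A basis $\{b_1,\dots,b_n\}$ of $B$ satisfies the matching condition if and only if its dual basis $\{f_1,\dots,f_n\}$ of $B^*$ satisfies $f_i \in W_i$ for each $i$; indeed $\ker f_i = \langle b_j : j \neq i\rangle$, so $V_i \subset \langle b_j : j \neq i\rangle$ is equivalent to $f_i(V_i)=0$, i.e.\ $f_i \in W_i$. Thus matching $\mathcal{A}$ to some basis of $B$ is equivalent to finding a linearly independent transversal $(f_i \in W_i)_{1 \le i \le n}$ in $B^*$, which then automatically spans $B^*$ since $\dim B^*=n$. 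By the Rado/linear Hall theorem applied to the subspaces $W_1,\dots,W_n$ of $B^*$, such a transversal exists if and only if $\dim \sum_{i \in J} W_i \ge |J|$ for every $J \subset \{1,\dots,n\}$. Standard annihilator duality in the finite-dimensional space $B$ gives $\bigl(\sum_{i \in J} W_i\bigr)^{\perp} = \bigcap_{i \in J} V_i$, hence $\dim \sum_{i \in J} W_i = n - \dim \bigcap_{i \in J} V_i$, and the Hall/Rado condition translates precisely into~(\ref{eq:dim estimates}).

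The main obstacle is not depth but bookkeeping: one must verify that the passage from matching bases of $B$ to independent transversals in $(W_i)$ is a true \emph{equivalence} (via dual bases) rather than a one-way implication, and that annihilators behave as expected even though $L$ is noncommutative---which they do, because the whole argument lives inside the $K$-vector space $B$ and its $K$-dual $B^*$, where no product in $L$ is performed.
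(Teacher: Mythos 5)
Your proposal is correct and follows essentially the same route as the paper: the forward direction by intersecting the hyperplanes $\langle b_j : j\neq i\rangle$, and the converse by passing to annihilators $(a_i^{-1}A\cap B)^{\perp}$ in $B^{*}$, applying the linear Hall/Rado theorem to obtain a free transversal, and taking the predual basis. The only cosmetic difference is that you phrase the dual-basis correspondence as a full equivalence, whereas the paper only uses the direction it needs.
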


For the proof of this equivalence in Section~\ref{proof thm:dim}, we shall
need a linear version of the classical marriage theorem of Hall \cite{Hall}.

\subsection{Free transversals}

\label{sec:Rado}

Let $E$ be a vector space over the field $K$ and let $\mathcal{E}%
=\{E_{1},E_{2},\dots,E_{n}\}$ be a collection of vector subspaces of $E$. A 
\textit{free transversal} for $\mathcal{E}$ is a free family of vectors $%
\{x_{1},\dots ,x_{n}\}$ in $E$ satisfying $x_{i}\in E_{i}$ for all $%
i=1,\dots,n.$ The following result of Rado \cite{Rado} gives necessary and
sufficient conditions for the existence of a free transversal for $\mathcal{E%
}$, very similar to those of Hall's marriage theorem. See also \cite{Mirsky,
Arocha, Moshonkin}.

\begin{theorem}
\label{TH_Lin8hall}Let $E$ be a vector space over $K$ and let $\mathcal{E}%
=\{E_{1},E_{2},\dots,E_{n}\}$ be a family of vector subspaces of $E$. Then $%
\mathcal{E}$ admits a free transversal if and only if 
\begin{equation}
\dim\underset{i\in J}{+}E_{i}\geq\left| J\right|  \label{LMC}
\end{equation}
for all $J\subset\{1,\dots,n\}.$
\end{theorem}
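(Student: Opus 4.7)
The plan is to prove the two implications separately, with induction on $n$ carrying the nontrivial direction.

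\textbf{Necessity.} This is immediate. If $\{x_1, \ldots, x_n\}$ is a free transversal with $x_i \in E_i$, then for any $J \subset \{1, \ldots, n\}$ the vectors $\{x_i : i \in J\}$ form a linearly independent family of size $|J|$ lying in $\sum_{i \in J} E_i$, whence $\dim \sum_{i \in J} E_i \ge |J|$.

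\textbf{Sufficiency.} I would induct on $n$, the case $n=1$ being trivial since the hypothesis then yields a nonzero element of $E_1$. For the step, I would split into two cases according to whether there is a \emph{critical} (i.e.\ tight) proper subfamily.

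\emph{Case A: no critical proper subfamily.} Assume $\dim \sum_{i \in J} E_i \ge |J|+1$ for every proper nonempty $J \subsetneq \{1,\ldots,n\}$. Pick any nonzero $x_n \in E_n$ (possible since $\dim E_n \ge 1$), and pass to the quotient $\overline E := E/\langle x_n\rangle$, setting $\overline E_i := (E_i + \langle x_n\rangle)/\langle x_n\rangle$. For $J \subset \{1,\ldots,n-1\}$ one has $\dim \sum_{i\in J}\overline E_i \ge \dim \sum_{i\in J}E_i - 1 \ge |J|$, so the Hall-type condition persists. Induction yields $\overline y_i \in \overline E_i$ forming a free family; choose lifts $y_i \in E_i$ (possible since $\overline E_i$ is the image of $E_i$). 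Then $\{y_1,\ldots,y_{n-1},x_n\}$ is free, because a non-trivial dependence would push down to a dependence among the $\overline y_i$.

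\emph{Case B: some proper critical subfamily exists.} Let $\emptyset \neq J_0 \subsetneq \{1,\ldots,n\}$ with $\dim \sum_{i\in J_0} E_i = |J_0|$. By induction the family $\{E_i\}_{i\in J_0}$ admits a free transversal $\{x_i\}_{i\in J_0}$, which then spans $V := \sum_{i\in J_0} E_i$ for dimension reasons. Pass to $E/V$ with the subspaces $\widetilde E_j := (E_j+V)/V$ for $j \notin J_0$. For any $J' \subset \{1,\ldots,n\}\setminus J_0$,
\[
\dim \sum_{j\in J'} \widetilde E_j \;=\; \dim \sum_{i\in J'\cup J_0} E_i - |J_0| \;\ge\; |J'\cup J_0| - |J_0| \;=\; |J'|,
\]
so the hypothesis holds for the reduced family. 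Induction gives $\widetilde y_j \in \widetilde E_j$ forming a free family, which lifts to $y_j \in E_j$ by writing any preimage in $E_j + V$ as $y_j + v_j$ and keeping $y_j$. The combined family $\{x_i\}_{i\in J_0} \cup \{y_j\}_{j\notin J_0}$ is free: the $y_j$'s are independent modulo $V$, and the $x_i$'s form a basis of $V$.

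The main obstacle is the bookkeeping in Case B, namely verifying that the Hall-type inequality survives the quotient by $V$, and checking that free transversals lift cleanly from $E/V$ back into the original subspaces $E_j$. Once this is handled, the rest is routine dimension arithmetic and the case split (A vs.\ B) exactly mirrors the classical proof of Hall's marriage theorem.
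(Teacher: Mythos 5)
Your proof is correct, but it takes a different route from the paper. The paper does not prove Theorem~\ref{TH_Lin8hall} directly: it invokes Rado's theorem on independent transversals in matroids (Theorem~\ref{Rado}), applied to the linear matroid on a union of chosen bases $F_i$ of the $E_i$, and observes that the matroid rank of $\bigcup_{i\in J}F_i$ is exactly $\dim\sum_{i\in J}E_i$, so that Rado's condition~(\ref{eq:rado}) translates into~(\ref{LMC}). The paper does remark in passing that a direct proof ``mimicking a proof of its classical counterpart'' is possible, and that is precisely what you carry out: a Halmos--Vaughan style induction, with the case split on the existence of a tight (critical) proper subfamily, quotienting by $\langle x_n\rangle$ in the non-critical case and by the span $V$ of a critical block in the critical case. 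Your dimension bookkeeping is right in both cases --- in particular the identity $\dim\sum_{j\in J'}\widetilde{E}_j=\dim\sum_{i\in J'\cup J_0}E_i-|J_0|$ and the lifting of transversals from the quotient back into the original subspaces are handled correctly (note only that you need \emph{strong} induction on $n$ in Case~B, since both subproblems have size strictly between $0$ and $n$). What your approach buys is self-containedness and elementarity; what the paper's approach buys is brevity and the placement of the result in the broader matroid-theoretic context of Rado's theorem, which it quotes anyway.
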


It is not too difficult to prove this result directly, by properly mimicking
a proof of its classical counterpart. In the above-mentioned paper of Rado,
Theorem~\ref{TH_Lin8hall} arises as a particular case of a more general
theorem concerning the existence of independent transversals in (possibly
infinite) matroids. A finite version would read as follows \cite[Chapter 12.2%
]{Oxley}.

\begin{theorem}
\label{Rado} Let $F$ be a finite set, let $\mathcal{F}=\{F_1,\ldots,F_n\}$
be a family of subsets of $F$, and let $M$ be a matroid over $F$ with rank
function $r$. Then the family $\mathcal{F}$ admits a transversal which is
independent in $M$ if and only if one has 
\begin{equation}  \label{eq:rado}
r(\bigcup_{i \in J} F_i) \ge |J|
\end{equation}
for all $J \subset \{1,\ldots,n\}$.
\end{theorem}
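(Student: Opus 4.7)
The plan is to prove necessity directly and sufficiency by induction on $N = \sum_{i=1}^n |F_i|$, using submodularity of the rank function.

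For necessity, if $\{x_1,\ldots,x_n\}$ is an independent transversal then, for every $J \subseteq \{1,\ldots,n\}$, the set $\{x_i : i \in J\}$ is an independent subset of $\bigcup_{i \in J} F_i$ of cardinality $|J|$, which directly gives $r(\bigcup_{i \in J} F_i) \geq |J|$.

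For sufficiency I would induct on $N$. Note first that the rank condition with $J = \{i\}$ forces $|F_i| \geq 1$ for all $i$, so $N \geq n$. In the base case $N = n$ each $F_i = \{x_i\}$ is a singleton; the rank condition applied to $J = \{i,i'\}$ forces $x_i \neq x_{i'}$, and applied to $J = \{1,\ldots,n\}$ it forces $\{x_1,\ldots,x_n\}$ to be independent in $M$ (since its rank cannot exceed its cardinality), yielding the desired transversal. For the inductive step, pick $j$ with $|F_j| \geq 2$ and distinct elements $a,b \in F_j$; let $\mathcal{F}_a$ (resp.\ $\mathcal{F}_b$) be obtained from $\mathcal{F}$ by replacing $F_j$ with $F_j \setminus \{a\}$ (resp.\ $F_j \setminus \{b\}$). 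Both have sum strictly less than $N$, so it suffices to show that at least one of them still satisfies the rank condition, after which the induction hypothesis produces an independent transversal of that family, and hence of $\mathcal{F}$.

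The heart of the argument is therefore the claim that at least one of $\mathcal{F}_a,\mathcal{F}_b$ preserves condition~(\ref{eq:rado}). Suppose toward contradiction that both fail; then there are witnessing subsets $S, T \subseteq \{1,\ldots,n\}$, both containing $j$ (since shrinking $F_j$ affects only subfamilies indexed through $j$), such that
\[
X \;=\; \bigcup_{i \in S \setminus \{j\}} F_i \,\cup\, (F_j \setminus \{a\}), \qquad Y \;=\; \bigcup_{i \in T \setminus \{j\}} F_i \,\cup\, (F_j \setminus \{b\})
\]
satisfy $r(X) \leq |S|-1$ and $r(Y) \leq |T|-1$. Because $a \neq b$, one checks that $X \cup Y = \bigcup_{i \in S \cup T} F_i$ and $X \cap Y \supseteq \bigcup_{i \in (S \cap T)\setminus\{j\}} F_i$; applying the hypothesis to $S \cup T$ and to $(S \cap T)\setminus\{j\}$, combined with submodularity of $r$ and the identity $|S \cup T| + |S \cap T| = |S| + |T|$, yields
\[
|S|+|T|-1 \;\leq\; r(X \cup Y) + r(X \cap Y) \;\leq\; r(X) + r(Y) \;\leq\; |S|+|T|-2,
\]
the desired contradiction.

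The main obstacle I expect is the bookkeeping in this submodularity estimate: one must carefully verify the set identity $X \cup Y = \bigcup_{i \in S \cup T} F_i$ and the containment $X \cap Y \supseteq \bigcup_{i \in (S \cap T)\setminus\{j\}} F_i$, and handle the edge case $S \cap T = \{j\}$ where the lower bound on $r(X \cap Y)$ degenerates to the trivial $r(X \cap Y) \geq 0$ (which, together with $|S \cap T| - 1 = 0$, still makes the estimate go through).
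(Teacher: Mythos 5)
Your proof is correct, and it is worth noting that the paper itself does not prove this statement at all: Theorem~\ref{Rado} is quoted from Rado's paper via Oxley's book, and the authors only show how to \emph{derive} the linear Hall theorem (Theorem~\ref{TH_Lin8hall}) from it. So you have supplied something the paper deliberately omits. Your argument is the classical deletion induction (in the spirit of the Halmos--Vaughan proof of Hall's theorem, which is essentially Rado's original method), upgraded to matroids by replacing cardinality counts with the rank function: necessity is immediate from monotonicity of $r$ and the fact that subsets of independent sets are independent, and the key step --- that deleting one of two distinct elements $a,b\in F_j$ must preserve the rank condition --- is exactly where submodularity $r(X)+r(Y)\ge r(X\cup Y)+r(X\cap Y)$ replaces the inclusion--exclusion count of the set-theoretic proof. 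All the details check out: the witnessing sets $S,T$ must contain $j$ since the condition is unchanged on subfamilies avoiding $j$; $X\cup Y=\bigcup_{i\in S\cup T}F_i$ because $(F_j\setminus\{a\})\cup(F_j\setminus\{b\})=F_j$ when $a\ne b$; $X\cap Y\supseteq\bigcup_{i\in(S\cap T)\setminus\{j\}}F_i$; and the degenerate case $S\cap T=\{j\}$ is handled as you say. The base case correctly extracts distinctness from the two-element subsets $J$ and independence from $J=\{1,\dots,n\}$. What your approach buys is a short, self-contained, elementary proof of the finite case; what the citation to Rado/Oxley buys the authors is the full strength of the theorem for possibly infinite matroids, which they do not actually need here since they immediately specialize to finitely many finite-dimensional subspaces.
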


Theorem~\ref{TH_Lin8hall} can be derived from Theorem~\ref{Rado} as follows.
For each $1 \le i \le n$, pick a basis $F_i$ of the subspace $E_i$, let $%
\mathcal{F}=\{F_1,\ldots,F_n\}$, and set 
\begin{equation*}
F = \bigcup_{1 \le i \le n} F_i.
\end{equation*}
As matroid $M$ over $F$, we consider the collection of linearly independent
subsets in $F$, with rank function $r$ defined by 
\begin{equation*}
r(S) = \dim_K \langle S \rangle 
\end{equation*}
for all subsets $S \subset F$. Here, as earlier, $\langle S \rangle$ denotes
the subspace of $E$ spanned by $S$. We now apply Theorem~\ref{Rado} in this
situation. It is clear, from the definition of the rank function $r$, that 
\begin{equation*}
r(\bigcup_{i \in J} F_i) = \dim\underset{i\in J}{+}E_{i}. 
\end{equation*}
Thus, conditions~(\ref{LMC}) and (\ref{eq:rado}) are equivalent, and an
independent transversal for $\mathcal{F}$ given by Theorem~\ref{Rado} yields
a free transversal for the family $\mathcal{E}$.

\subsection{Proof of Proposition~\protect\ref{thm:dim}}

\label{proof thm:dim}

We shall use the following standard notation. We denote by 
\begin{equation*}
B^{\ast}=\{f:B\rightarrow K\mid f\text{ is linear}\} 
\end{equation*}
the \textit{dual} of $B$. Moreover, for any subspace $C\subset B$, we denote
by 
\begin{equation*}
C^{\perp}=\{f\in B^{\ast}\mid C\subset\ker f\} 
\end{equation*}
the \textit{orthogonal} of $C$ in $B^{\ast}$. Recall that $\dim C^{\perp} =
\dim B - \dim C$.

\smallskip

We now prove Proposition~\ref{thm:dim}, using Theorem~\ref{TH_Lin8hall} as a
key ingredient.

\medskip

\begin{proof}
$\Rightarrow$) Assume first that $\mathcal{A}$ is matched to the basis $%
\mathcal{B}=\{b_1,\ldots,b_n\}$ of $B$. It follows from condition~(\ref%
{match}) that 
\begin{equation*}
a_i^{-1}A \cap B \; \subset \; \langle b_1, \dots, \widehat{b_i}, \dots, b_n
\rangle 
\end{equation*}
for all $1 \le i \le n$. This implies, for any $J \subset \{1,\ldots,n\}$,
that 
\begin{equation*}
\bigcap_{i \in J} (a_i^{-1}A \cap B) \; \subset \; \bigcap_{i \in J} \langle
b_1, \dots, \widehat{b_i}, \dots, b_n \rangle \; = \; \langle \mathcal{B}
\setminus \{b_i \mid i \in J\} \rangle. 
\end{equation*}
It follows that $\dim \bigcap_{i \in J} (a_i^{-1}A \cap B) \; \le \; n - |J|$%
, as claimed.

\medskip

\noindent $\Leftarrow$) Assume now that, for all $J \subset \{1,\ldots,n\}$,
we have 
\begin{equation*}
\dim \bigcap_{i \in J} (a_i^{-1}A \cap B) \; \le \; n-|J|.
\end{equation*}
Taking the orthogonal in the dual space $B^\ast$, we get 
\begin{equation*}
\dim \left(\bigcap_{i \in J} (a_i^{-1}A \cap B)\right)^\perp \; \ge \; |J|, 
\end{equation*}
and hence 
\begin{equation*}
\dim \sum_{i \in J} (a_i^{-1}A \cap B)^\perp \; \ge \; |J|. 
\end{equation*}
By Theorem~\ref{TH_Lin8hall}, the linear version of Hall's theorem, the
above dimension bounds imply the existence of a free transversal 
\begin{equation*}
\varphi_1,\ldots,\varphi_n \; \in \; B^\ast 
\end{equation*}
for the system of subspaces $\{(a_i^{-1}A \cap B)^\perp\}_{1 \le i \le n}$.
In other words, we have 
\begin{equation}  \label{perp}
\varphi_i \in (a_i^{-1}A \cap B)^\perp
\end{equation}
for all $1 \le i \le n$, and $\{\varphi_1,\ldots,\varphi_n\}$ is free and
hence a basis of $B^\ast$.

Let $\mathcal{B}=\{b_1,\ldots,b_n\}$ be the unique basis of $B$ whose dual
basis $\mathcal{B}^\ast$ equals $\{\varphi_1,\ldots,\varphi_n\}$, i.e. such
that $b_i^\ast=\varphi_i$ for all $i$. By (\ref{perp}), we have 
\begin{equation*}
b_i^\ast \left(a_i^{-1}A \cap B\right) \; = \; \{0\}, 
\end{equation*}
whence $a_i^{-1}A \cap B \; \subset \; \langle b_1, \dots, \widehat{b_i},
\dots, b_n \rangle$ for all $i$, as desired.
\end{proof}

\section{Linear versions of some additive theorems}

\label{linearized} In order to exploit the equivalence given by Proposition~%
\ref{thm:dim}, we shall need tools to establish the required dimension
estimates (\ref{eq:dim estimates}). These tools will be conveniently
provided by two results in linearized additive number theory, both
established in \cite{EL2}.

\medskip

Our first tool is a linear version of a classical theorem of Kemperman \cite%
{Kem}.

\begin{theorem}
\label{th_AB}Let $K \subset L$ be a field extension, with $K$ commutative
and central in $L$. Let $A,B$ be finite-dimensional $K$-subspaces of $L$
such that $K\subset A\cap B$. Suppose there exist subspaces $\overline{A},%
\overline{B}\subset L$ such that 
\begin{equation*}
A=K\oplus\overline{A},\text{ }B=K\oplus\overline{B}\text{ and }K\cap (%
\overline{A}+\overline{B}+\langle\overline{A}\,\overline{B}\rangle )=\{0\}. 
\end{equation*}
Then 
\begin{equation*}
\dim\langle AB\rangle\geq\dim A+\dim B-1. 
\end{equation*}
\end{theorem}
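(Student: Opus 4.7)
The strategy is to reduce the inequality to a concrete statement about the "bar" parts of $A$ and $B$ via the given decompositions, and then establish that statement by an induction modelled on Kemperman's combinatorial argument.

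First, I would expand an arbitrary product. Writing $a = \lambda + \bar a$ and $b = \mu + \bar b$ with $\lambda,\mu \in K$, $\bar a \in \overline A$, $\bar b \in \overline B$, and using that $K$ is central in $L$ (so that in particular $\mu\bar a \in \overline A$ and $\lambda\bar b \in \overline B$), one finds
\[
ab \;=\; \lambda\mu + \lambda\bar b + \mu\bar a + \bar a\,\bar b.
\]
Since $\overline A$ and $\overline B$ are $K$-subspaces, this yields the identity
\[
\langle AB\rangle \;=\; K + \overline{A} + \overline{B} + \langle \overline{A}\,\overline{B}\rangle,
\]
with the reverse inclusion coming from $1 \in K$.

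Second, I would invoke the non-degeneracy hypothesis $K \cap (\overline A + \overline B + \langle\overline{A}\,\overline{B}\rangle) = \{0\}$: it says that $K$ splits off as a direct summand from the right-hand side. Hence
\[
\dim\langle AB\rangle \;=\; 1 + \dim\bigl(\overline A + \overline B + \langle\overline{A}\,\overline{B}\rangle\bigr),
\]
and since $\dim A + \dim B - 1 = \dim\overline A + \dim\overline B + 1$, the theorem reduces to
\[
\dim\bigl(\overline A + \overline B + \langle\overline{A}\,\overline{B}\rangle\bigr) \;\ge\; \dim\overline A + \dim\overline B.
\]

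Third, I would prove this reduced inequality by induction on $\dim A + \dim B$. The base cases $\overline A = 0$ or $\overline B = 0$ make the inequality an equality, since then $\langle AB\rangle$ coincides with $B$ or $A$ respectively. For the inductive step, I would pick a well-chosen nonzero $\bar a_0 \in \overline A$, write $\overline A = K\bar a_0 \oplus \overline A'$, and possibly adjust the complement of $K$ in $B$ so that the smaller pair $(K \oplus \overline A',\, B)$ still satisfies the direct-sum hypothesis. The inductive bound then provides enough linearly independent vectors inside $\overline{A}' + \overline B + \langle \overline{A}'\,\overline{B}\rangle$, and a single additional independent vector of the form $\bar a_0\bar b$ (or $\bar a_0\mu$ for a suitable $\mu$) recovers the full bound.

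The main obstacle is this last step, and specifically the task of both preserving the direct-sum condition after the reduction and producing the one missing independent product. These mirror the delicate "saturated pair" subtleties in Kemperman's original combinatorial proof. One would typically argue by contradiction: if after the reduction the hypothesis failed, or if no fresh independent product vector existed, one could reassemble a nonzero element of $K \cap (\overline A + \overline B + \langle\overline{A}\,\overline{B}\rangle)$, contradicting the original assumption and forcing a more careful choice of $\bar a_0$ and of the complements.
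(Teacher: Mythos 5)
Your first two steps are correct: since $1\in A\cap B$ and $K$ is central, expanding $ab=(\lambda+\bar a)(\mu+\bar b)$ does give $\langle AB\rangle=K+\overline{A}+\overline{B}+\langle\overline{A}\,\overline{B}\rangle$, and the hypothesis $K\cap(\overline{A}+\overline{B}+\langle\overline{A}\,\overline{B}\rangle)=\{0\}$ turns this into $\dim\langle AB\rangle=1+\dim(\overline{A}+\overline{B}+\langle\overline{A}\,\overline{B}\rangle)$, so the theorem is equivalent to
\begin{equation*}
\dim\bigl(\overline{A}+\overline{B}+\langle\overline{A}\,\overline{B}\rangle\bigr)\;\ge\;\dim\overline{A}+\dim\overline{B}.
\end{equation*}
But note that this reduced inequality is exactly Corollary~\ref{corKem} (take $U=\overline{A}$, $V=\overline{B}$, $X=\overline{A}+\overline{B}+\langle\overline{A}\,\overline{B}\rangle$), which the present paper deduces \emph{from} Theorem~\ref{th_AB} by the reverse of your computation. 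So after your normalization you have shown the two statements are equivalent; you have not yet proved either. The paper itself contains no proof of Theorem~\ref{th_AB}: it is quoted from \cite{EL2}, where it is the main linearized Kemperman theorem and occupies a substantial argument of its own.

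The genuine gap is therefore your third step, which is the entire mathematical content of the result and which you yourself describe as "the main obstacle" without resolving it. Two concrete problems. First, the induction you sketch --- strip one basis vector $\bar a_0$ off $\overline{A}$ and find "a single additional independent vector of the form $\bar a_0\bar b$" --- is the naive induction that already fails for the classical Cauchy--Davenport/Kemperman theorems in groups: for a badly chosen $\bar a_0$ there is no reason that any $\bar a_0\bar b$ (or $\bar a_0\mu$) escapes $\overline{A}'+\overline{B}+\langle\overline{A}'\,\overline{B}\rangle$, and you give no mechanism for choosing $\bar a_0$ so that one does. Kemperman's proof, and its linear analogue in \cite{EL2}, instead run an induction driven by a Dyson/Kemperman-type transform (in the linear setting, replacing the pair $(A,B)$ by something like $(A\cap xB,\ldots)$ and inducting on $\dim(A\cap xB)$), together with an analysis of the stabilizer subfield; none of that structure appears in your plan. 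Second, the claim that a failure could be "reassembled" into a nonzero element of $K\cap(\overline{A}+\overline{B}+\langle\overline{A}\,\overline{B}\rangle)$ is asserted, not argued, and preserving the direct-sum hypothesis for the smaller pair is likewise left open. As it stands the proposal is a correct reformulation plus an acknowledged placeholder where the proof should be.
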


For the proof of Theorem~\ref{thm:automatching} in Section~\ref{proofs:auto}%
, we shall actually use the following corollary.

\begin{corollary}
\label{corKem} Let $U,V$ be finite-dimensional $K$-subspaces of $L$. Assume
that $U$, $V$ and $UV$ are all three contained in a $K$-subspace $X$ of $L$
such that $K\cap X=\{0\}.$ Then 
\begin{equation*}
\dim X\geq\dim U+\dim V. 
\end{equation*}
\end{corollary}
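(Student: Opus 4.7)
The plan is to apply Theorem~\ref{th_AB} to the augmented subspaces $A = K \oplus U$ and $B = K \oplus V$, using $X$ as the ambient space that controls the intersection with $K$. First I would check that these sums are direct: since $U,V \subset X$ and $K \cap X = \{0\}$, we have $K \cap U = K \cap V = \{0\}$, so setting $\overline{A} = U$ and $\overline{B} = V$ gives the required decomposition $A = K \oplus \overline{A}$ and $B = K \oplus \overline{B}$, with $K \subset A \cap B$.

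The crucial hypothesis of Theorem~\ref{th_AB} to verify is
\begin{equation*}
K \cap \bigl(\overline{A} + \overline{B} + \langle \overline{A}\,\overline{B}\rangle\bigr) \;=\; K \cap \bigl(U + V + \langle UV \rangle\bigr) \;=\; \{0\}.
\end{equation*}
But this is immediate from the assumption: $U$, $V$ and $UV$ all lie in $X$, hence so does $U + V + \langle UV \rangle$, and $K \cap X = \{0\}$ forces the intersection with $K$ to be trivial.

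Having verified the hypotheses, Theorem~\ref{th_AB} yields
\begin{equation*}
\dim \langle AB \rangle \;\geq\; \dim A + \dim B - 1 \;=\; (1 + \dim U) + (1 + \dim V) - 1 \;=\; \dim U + \dim V + 1.
\end{equation*}
The final step is to unwind $\langle AB \rangle$: expanding the product gives $AB = (K+U)(K+V) = K + U + V + UV$, so $\langle AB \rangle = K + U + V + \langle UV \rangle$. Since $K$ intersects $U + V + \langle UV \rangle$ trivially, this sum is in fact direct, and therefore
\begin{equation*}
\dim \langle AB \rangle \;=\; 1 + \dim\bigl(U + V + \langle UV \rangle\bigr).
\end{equation*}
Combining with the lower bound above gives $\dim(U + V + \langle UV \rangle) \geq \dim U + \dim V$. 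Since $U + V + \langle UV \rangle \subset X$, we conclude $\dim X \geq \dim U + \dim V$, as required.

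There is no serious obstacle here; the argument is a direct reduction to Theorem~\ref{th_AB} once one spots the right choice $A = K \oplus U$, $B = K \oplus V$. The only point demanding a little care is the bookkeeping at the end to recognize $\langle AB \rangle$ as $K$ plus a subspace of $X$, which is what converts the Kemperman-type bound into the desired inequality for $\dim X$.
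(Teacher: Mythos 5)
Your argument is correct and follows essentially the same route as the paper: both set $A=K\oplus U$, $B=K\oplus V$, verify the hypothesis $K\cap(U+V+\langle UV\rangle)=\{0\}$ via $K\cap X=\{0\}$, apply Theorem~\ref{th_AB}, and then identify $\langle AB\rangle=K\oplus(U+V+\langle UV\rangle)\subset K\oplus X$ to extract the bound on $\dim X$. Your version merely spells out a few more of the verification details.
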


\begin{proof}
Let $A=K\oplus U$, $B=K\oplus V$.\ Then $K\cap(U+V+\langle UV \rangle)=\{0\}.
$ Therefore Theorem~\ref{th_AB} applies, and gives 
\begin{equation*}
\dim\langle AB\rangle\geq\dim A+\dim B-1. 
\end{equation*}
Since $\langle AB\rangle=K\oplus(U+V+\langle UV \rangle)\subset K\oplus X$,
this gives $\dim\langle AB\rangle\leq\dim X+1$. With the equalities $\dim
A=\dim U+1$ and $\dim B=\dim V+1$, we then derive 
\begin{equation*}
\dim X\geq\dim\langle AB\rangle-1\geq\dim A+\dim B-2\geq\dim U+\dim V, 
\end{equation*}
as desired.
\end{proof}

\medskip

Our second promised tool from \cite{EL2} is a linear version of a classical
theorem of Olson \cite{Ols}. It will be used in the proof of Theorem~\ref%
{thm:matching property} in Section~\ref{proofs:match}.

\begin{theorem}
\label{TH-Ol8lin}Let $K \subset L$ be a field extension, with $K$
commutative and central in $L$. Let $A,B$ be finite-dimensional $K$%
-subspaces of $L$ distinct from $\{0\}$.\ Then there exist a $K$-subspace $S$
of $\langle AB\rangle$ and a subfield $M$ of $L$ such that

\begin{enumerate}
\item[(1)] $K\subset M\subset L,$

\item[(2)] $\dim S\geq\dim A+\dim B-\dim M$,

\item[(3)] $MS=S$ or $SM=S$.
\end{enumerate}
\end{theorem}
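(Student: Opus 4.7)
The plan is to prove Theorem~\ref{TH-Ol8lin} by induction on $\dim_K A+\dim_K B$, with Theorem~\ref{th_AB} (the linear Kemperman theorem) as the main technical lever. The overall structure mirrors the classical proof of Olson's addition theorem in the setting of subsets of a group.

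First I would normalize: choose nonzero $a_0\in A$ and $b_0\in B$ and replace $A$ by $A'=a_0^{-1}A$ and $B$ by $B'=Bb_0^{-1}$, so that $1\in A'\cap B'$ and hence $K\subset A'\cap B'$. The conclusion of the theorem is invariant under this change: if $S'$ and $M$ work for $(A',B')$ with, say, $MS'=S'$, then $S=a_0 S' b_0\subseteq\langle AB\rangle$ together with $M'=a_0 M a_0^{-1}$ work for $(A,B)$, where $M'$ is again a subfield of $L$ containing $K$ because $K$ is central. So we may assume $K\subset A\cap B$ and write $A=K\oplus\bar A$, $B=K\oplus\bar B$.

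The argument now splits into two cases. If $K\cap(\bar A+\bar B+\langle\bar A\bar B\rangle)=\{0\}$, Theorem~\ref{th_AB} gives $\dim\langle AB\rangle\geq\dim A+\dim B-1$, and taking $M=K$ and $S=\langle AB\rangle$ fulfils all three conclusions since $K$ is central and $\dim_K K=1$. Otherwise, $K$ meets $\bar A+\bar B+\langle\bar A\bar B\rangle$ nontrivially; as $K$ is a field this is equivalent to $1\in\bar A+\bar B+\langle\bar A\bar B\rangle$. This is the deficient case, where Kemperman alone is insufficient and a larger stabilizer subfield must be produced.

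In the deficient case I would set $M=\{x\in L:x\langle AB\rangle\subseteq\langle AB\rangle\}$, the left stabilizer of $\langle AB\rangle$. This is a finite-dimensional $K$-subalgebra of $L$ containing $K$, and since it sits inside the field $L$ it is automatically a subfield; moreover $M\langle AB\rangle=\langle AB\rangle$ secures condition (3) with $S=\langle AB\rangle$. The main obstacle is establishing condition (2), namely $\dim\langle AB\rangle\geq\dim A+\dim B-\dim M$. The plan is to exploit the deficiency to exhibit subspaces $A_1\subseteq A$ and $B_1\subseteq B$ of strictly smaller total dimension on which the inductive hypothesis applies, and then to combine the resulting dimension estimate with a counting identity linking $\langle A_1 B_1\rangle$, the $M$-periodicity of $\langle AB\rangle$, and a correction term of size $\dim M$. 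This Kneser-style descent step — the analogue of Kneser's theorem powering the classical Olson argument — is the delicate part: one must ensure simultaneously that the reduction strictly decreases $\dim A+\dim B$ (so induction is legitimate) and that the stabilizer subfield produced by the inductive step is compatible with the ambient $M$, so that the dimension inequalities telescope to the desired bound.
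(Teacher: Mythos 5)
Your proposal does not close the statement, and the part you defer is exactly where the whole content of the theorem lies. Note first that the paper itself does not prove Theorem~\ref{TH-Ol8lin}: it is imported from \cite{EL2}, where it is established by a genuinely involved argument (an iterated Kemperman/Dyson-transform descent), of which Theorem~\ref{th_AB} is only one ingredient. Your normalization step and your non-deficient case are fine: conjugating by $a_0$ preserves all the data because $K$ is central, and when some choice of complements satisfies $K\cap(\overline{A}+\overline{B}+\langle\overline{A}\,\overline{B}\rangle)=\{0\}$ one can indeed take $M=K$ and $S=\langle AB\rangle$. (Minor point: since the complements are not unique, the dichotomy must be over the existence of \emph{some} good pair $(\overline{A},\overline{B})$, not over a fixed one.)

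The genuine gap is the deficient case. There you set $S=\langle AB\rangle$ and $M=\{x\in L : x\langle AB\rangle\subset\langle AB\rangle\}$; conditions (1) and (3) are then automatic, but condition (2) becomes the assertion $\dim\langle AB\rangle\geq\dim A+\dim B-\dim M$ with $M$ the \emph{full} stabilizer of the whole product space. That is precisely a linear Kneser theorem, a statement strictly stronger than Theorem~\ref{TH-Ol8lin}, and it is not available in this generality: even in the group setting, Kneser's inequality with the stabilizer of $AB$ fails for nonabelian groups, which is exactly why Olson's theorem (and its linearization) is formulated with a subspace $S$ that may be a \emph{proper} subspace of $\langle AB\rangle$, stabilized on one side by a subfield that is in general unrelated to the stabilizer of $\langle AB\rangle$. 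Since $L$ is allowed to be a skew field here, there is no reason your target inequality holds for that choice of $(S,M)$, and no known route proves it. The ``Kneser-style descent'' you invoke is not a routine verification one can wave at: in the correct proof the induction is run on transformed pairs $(A\cap xB^{-1}\text{-type spaces})$ produced by a linear analogue of the Dyson transform, the subfield $M$ emerges from the terminal pair of that process, and $S$ is built from the transformed spaces rather than taken to be $\langle AB\rangle$. As written, your argument proves the theorem only in the non-deficient case, i.e.\ it reproves (a consequence of) Theorem~\ref{th_AB}, and leaves the actual theorem unproved.
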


\section{Proofs of the main results}

\label{proofs}

Let again $K \subset L$ be a field extension, with $K$ commutative and
central in $L$. Let $A,B$ be $n$-dimensional $K$-subspaces of $L$. The above
linearized versions of additive theorems will allow us to fulfill, under
appropriate circumstances, the dimension estimates required by Proposition~%
\ref{thm:dim}, and thereby to prove Theorem~\ref{thm:matching property} and %
\ref{thm:automatching}.

\subsection{Proof of Theorem~\protect\ref{thm:automatching}}

\label{proofs:auto}

\begin{theorem}
\label{th-match1}Let $B$ be a finite-dimensional $K$-subspace of $L$. Then $B
$ is matched to itself if and only if $1\notin B$.
\end{theorem}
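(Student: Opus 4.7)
The plan is to prove the two directions separately. The ``only if'' direction is immediate from Lemma~\ref{1 notin B}: if $B$ is matched to itself, then in particular some basis of $B$ is matched to some basis of $B$, which forces $1 \notin B$.

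For the ``if'' direction, I would fix an arbitrary basis $\mathcal{A} = \{a_1, \ldots, a_n\}$ of $B$ and, using Proposition~\ref{thm:dim}, reduce the problem to verifying the dimension estimates
\begin{equation*}
\dim \bigcap_{i \in J} (a_i^{-1} B \cap B) \; \le \; n - |J|
\end{equation*}
for every $J \subset \{1, \ldots, n\}$. The case $J = \emptyset$ is trivial, so fix a nonempty $J$, and set $U_J = \langle a_i \mid i \in J\rangle$ and $W_J = \bigcap_{i \in J}(a_i^{-1}B \cap B)$. By construction, $W_J \subset B$ and $a_i W_J \subset B$ for all $i \in J$, so $U_J, W_J$, and $U_J W_J$ are all contained in $B$. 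Moreover $\dim U_J = |J|$ since $\mathcal{A}$ is a basis.

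The key observation is that $1 \notin B$ forces $K \cap B = \{0\}$: indeed, $B$ is a $K$-subspace, so if a nonzero $\lambda \in K$ lay in $B$ then $1 = \lambda^{-1} \lambda \in \lambda^{-1} B \subset B$, a contradiction. Thus $X = B$ fulfills the hypothesis $K \cap X = \{0\}$ of Corollary~\ref{corKem}. Applying that corollary to $U = U_J$, $V = W_J$ (the case $W_J = \{0\}$ being trivial), we obtain
\begin{equation*}
n \; = \; \dim B \; \ge \; \dim U_J + \dim W_J \; = \; |J| + \dim W_J,
\end{equation*}
which rearranges to the required estimate $\dim W_J \le n - |J|$. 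Since $\mathcal{A}$ was arbitrary, this shows that every basis of $B$ matches some basis of $B$, i.e.\ $B$ is matched to itself.

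I don't expect any serious obstacle: the argument is a direct application of the tools already assembled. The only subtle point is recognizing that the corollary of the linear Kemperman theorem is exactly tailored for this situation, once one notices that the single subspace $B$ itself simultaneously contains $U_J$, $W_J$, and their product, and satisfies $K \cap B = \{0\}$.
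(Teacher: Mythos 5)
Your proposal is correct and follows essentially the same route as the paper: reduce to the dimension estimates of Proposition~\ref{thm:dim} and apply Corollary~\ref{corKem} with $U = \langle a_i \mid i \in J\rangle$, $V = \bigcap_{i\in J}(a_i^{-1}B\cap B)$ and $X = B$. Your explicit verification that $1\notin B$ implies $K\cap B=\{0\}$ is a detail the paper leaves implicit, but the argument is the same.
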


\begin{proof}
We already know from Lemma~\ref{1 notin B} that if $B$ contains 1, then $B$
cannot be matched to itself. Conversely, assume $1 \notin B$. Let $\mathcal{A%
}=\{a_1,\ldots, a_n\}$ be any basis of $B$. For $J \subset \{1,\ldots,n\}$,
denote 
\begin{equation*}
V_{J}=\bigcap_{i\in J}(a_i^{-1}B \cap B)=\{x\in B\mid a_i x\in B\text{ for
all }i\in J\}. 
\end{equation*}
It follows from Proposition~\ref{thm:dim} that $\mathcal{A}$ can be matched
to another basis of $B$ if and only if 
\begin{equation}  \label{eq:cond}
\dim V_J \; \le \; n-|J|
\end{equation}
for all $J \subset \{1,\ldots,n\}$. Denote now $B_J$ the subspace of $B$
spanned by the subset $\{a_i \mid i \in J\}$ of $\mathcal{A}$. Then we have $%
\dim B_J = |J|$, and 
\begin{equation*}
B_J V_J \subset B 
\end{equation*}
by construction. Since $1\notin B$, Corollary~\ref{corKem} applies, with $%
U,V,X$ standing for $B_J, V_{J}, B$ respectively. This gives 
\begin{equation*}
\dim B_J +\dim V_{J} \; \leq\; \dim B, 
\end{equation*}
i.e. exactly condition~(\ref{eq:cond}). By Proposition~\ref{thm:dim}, the
basis $\mathcal{A}$ can be matched to another basis of $B$. Therefore, the
space $B$ is matched to itself.
\end{proof}

\subsection{Proof of Theorem~\protect\ref{thm:matching property}}

\label{proofs:match}

We now turn to the characterization of all field extensions satisfying the
linear matching property.

\begin{theorem}
Let $K\subset $ be a field extension, with $K$ commutative and central in $L$%
. Then $L$ has the linear matching property if and only if $L$ contains no
proper finite-dimensional extension over $K$.
\end{theorem}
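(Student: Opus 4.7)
For necessity ($\Rightarrow$), I would prove the contrapositive. Suppose $L$ contains a proper intermediate finite-dimensional subfield $K \subsetneq M \subsetneq L$ with $m = \dim_K M$. The plan is to take $A = M$ and construct $B$ of dimension $m$ against which $A$ cannot be matched. The crucial observation is that for any basis $\{a_1,\ldots,a_m\}$ of $A = M$ one has $a_i^{-1}A = a_i^{-1}M = M$ (since $M$ is a subfield and $a_i \in M \setminus \{0\}$), so $V_J = M \cap B$ for every nonempty $J$, independent of the basis. I would then set $B = B_0 \oplus K\beta$, where $B_0$ is any hyperplane of $M$ avoiding $1$ (which exists since $m \geq 2$) and $\beta \in L \setminus M$ (which exists since $M \subsetneq L$). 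Routine checks give $\dim B = m$, $1 \notin B$, and $M \cap B = B_0 \neq \{0\}$, so the dimension condition of Proposition~\ref{thm:dim} fails at $J = \{1, \ldots, m\}$ and $A$ cannot be matched to $B$.

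For sufficiency ($\Leftarrow$), assume $L$ has no proper intermediate finite-dimensional extension over $K$. Take $A, B$ of dimension $n$ with $1 \notin B$ and any basis $\mathcal{A}$ of $A$; I verify $\dim V_J \leq n - |J|$ for every $J \subset \{1, \ldots, n\}$. If $V_J = \{0\}$ this is immediate, so assume $V_J \neq \{0\}$. The key trick is to exploit $1 \notin V_J$ (which follows from $V_J \subset B$) to gain one dimension: set $W = K \oplus V_J$, so $\dim W = \dim V_J + 1$. Crucially, $A_J W = A_J + A_J V_J \subset A$ since both $A_J$ and $A_J V_J$ lie in $A$, hence $\dim\langle A_J W \rangle \leq n$. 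Applying the linear Olson theorem (Theorem~\ref{TH-Ol8lin}) to $A_J$ and $W$ yields a subfield $K \subset M \subset L$ and a subspace $S \subset \langle A_J W \rangle$ satisfying $\dim S \geq |J| + \dim V_J + 1 - \dim M$ together with $MS = S$ or $SM = S$.

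The core step is to show $M = K$, whence $|J| + \dim V_J \leq \dim S \leq n$. Under the hypothesis, any subfield $M$ with $K \subsetneq M \subset L$ is either infinite-dimensional over $K$ or equal to $L$ (the latter only if $L$ is itself finite-dimensional). An infinite-dimensional $M$ is incompatible with a nonzero $S \subset A$: any $0 \neq s \in S$ would generate $Ms$ (or $sM$) of infinite $K$-dimension inside $S$. The case $M = L$ forces $S$ to be a left or right $L$-submodule of $L$, hence $S \in \{\{0\}, L\}$; the choice $S = L$ yields $L \subset \langle A_J W \rangle \subset A$, so $A = L$ by dimension count, and then $B = L$ forces $1 \in B$, contradicting the hypothesis. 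So $M \neq K$ reduces to the degenerate case $S = \{0\}$. The main obstacle I expect is excluding this degenerate possibility: the Olson bound is then vacuous when $M$ is allowed to be large, and the resolution requires invoking Theorem~\ref{TH-Ol8lin} from \cite{EL2} in the form where the pair $(S, M)$ reflects the stabilizer structure of $\langle A_J W \rangle$, guaranteeing $S \neq \{0\}$ and thus forcing $M = K$.
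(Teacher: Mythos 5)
Your proof follows essentially the same route as the paper's: for necessity you take $A$ to be a proper finite-dimensional intermediate field $M$ and let $B$ be a hyperplane of $M$ avoiding $1$ plus a vector outside $M$ (the paper does the same with $A=K(a)$ and the hyperplane $\langle a,\dots,a^{n-1}\rangle$), and for sufficiency you set $W_J=K\oplus V_J$ and apply the linear Olson theorem (Theorem~\ref{TH-Ol8lin}) to $A_J$ and $W_J$ to force $M=K$. Your handling of the degenerate cases ($M$ infinite-dimensional over $K$, $S=\{0\}$) is in fact slightly more explicit than the paper's own argument, which likewise tacitly relies on the subspace $S$ in Theorem~\ref{TH-Ol8lin} being nonzero; the proof is correct.
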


\begin{proof}
Assume first that $L$ is neither purely transcendental nor an extension of
prime degree. Then there is an element $a\in L$, of finite degree $n\geq2$
over $K$, such that $K(a)\varsubsetneqq L$. In particular, we have 
\begin{equation*}
K(a)=\langle 1,a,\ldots,a^{n-1} \rangle. 
\end{equation*}
Set $A= K(a)$. Let now $x\in L\setminus K(a)$, and set 
\begin{equation*}
B=\langle a,\ldots,a^{n-1},x \rangle. 
\end{equation*}
We claim that $A$ is not matched to $B$. Indeed, consider the basis $%
\mathcal{A}=\{1,a,\ldots,a^{n-1}\}$ of $A$. Since $A=K(a)$ is a subfield of $%
L,$ we have 
\begin{equation*}
a_{i}^{-1}A\cap B=A\cap B=\langle a,\ldots,a^{n-1} \rangle
\end{equation*}
for all $1 \le i \le n$. Therefore, the condition $\dim \bigcap_{i \in J}
(a_i^{-1}A \cap B) \; \le \; n-|J|$ of Proposition~\ref{thm:dim} does not
hold for $J = \{1,\ldots,n\}$, for instance. It follows that $\mathcal{A}$
cannot be matched to a basis of $B$.

\smallskip

Conversely, assume that the only finite-dimensional subfields of $L$ extending $K$ are $K$%
, and $L$ itself if it is finite-dimensional over $K$. The field $L=K$ contains no proper intermediate extension and vacuously
satisfies the linear matching property. Assume now $L\neq K$. Let $A,B$ be $n
$-dimensional $K$-subspaces of $L$ with $1\notin B$. Let $\mathcal{A}%
=\{a_{1},\ldots ,a_{n}\}$ be any basis of $A$. For any $J\subset \{1,\ldots
,n\}$, denote 
\begin{equation*}
V_{J}=\bigcap_{i\in J}(a_{i}^{-1}A\cap B)=\{x\in B\mid a_{i}x\in A\text{ for
all }i\in J\}.
\end{equation*}%
By Proposition~\ref{thm:dim} again, we know that $\mathcal{A}$ can be
matched to a basis of $B$ if and only if 
\begin{equation}
\dim V_{J}\;\leq \;n-|J|  \label{eq:cond2}
\end{equation}%
for all $J\subset \{1,\ldots ,n\}$. As earlier, denote $A_{J}$ the subspace
of $A$ spanned by the subset $\{a_{i}\mid i\in J\}$ of $\mathcal{A}$. Then
we have $\dim A_{J}=|J|$, and 
\begin{equation*}
A_{J}V_{J}\subset A.
\end{equation*}%
Set $W_{J}=K\oplus V_{J}$. We have $\dim W_{J}=\dim V_{J}+1$, and still $%
A_{J}W_{J}\subset A$ by construction. By Theorem \ref{TH-Ol8lin}, applied to
the subspaces $A_{J}$ and $W_{J}$, there is an intermediate field extension $%
K\subset M\subset L$ and a subspace $T\subset \langle A_{J}W_{J}\rangle $
such that 
\begin{equation}
\dim \langle A_{J}W_{J}\rangle \geq \dim A_{J}+\dim W_{J}-\dim M,
\label{Hall5}
\end{equation}%
and $MT=T$ or $TM=T$. We cannot have $M=L$, for otherwise $T=L$; but as $%
T\subset A_{J}W_{J}\subset A$, this would imply $A=L=B$, contradicting the
hypothesis $1\notin B$. It follows that $M=K$, and inequality (\ref{Hall5})
yields 
\begin{equation*}
\dim A\geq |J|+\dim V_{J},
\end{equation*}%
since $\langle A_{J}W_{J}\rangle \subset A$, $\dim W_{J}=\dim V_{J}+1$ and $%
\dim M=1$. Therefore conditions (\ref{eq:cond2}) are satisfied, implying
that $\mathcal{A}$ can be matched to a basis of $B$. It follows that $L$ has
the linear matching property.
\end{proof}

\bigskip 

\begin{corollary}
Let $L$ be a commutative finite-dimensional Galois extension of the
(commutative) field $K$. Then $L$ has the linear matching property if and
only if $L$ is an extension of $K$ of prime degree.
\end{corollary}

\begin{proof} Indeed, for a Galois extension of degree $n$, the intermediate extensions are in reversing bijection with the subgroups of the Galois group $G$ of order $n$. Thus, if $n$ is not a prime number, then $G$ will have proper subgroups $\{1\} \not= H \not= G$, thereby yielding proper intermediate extensions $L \supsetneq M \supsetneq K$.  
\end{proof}

\begin{remark}
\label{Rem_Gap} For any  non-prime integer $n>1$, there exists a field extension $K \subset L$ of characteristic $0$ and degree $n$ admitting no proper intermediate extension. It can be constructed as follows. 
Take $L=k(X_{1},\ldots ,X_{n})$ the field of rational functions in the
commutative variables $X_{1},\ldots ,X_{n}$ over an arbitrary field $k$ of
characteristic $0$. Set $S=L^{S_{n}}$ the subfield of $L$ of rational
symmetric functions. The field $L$ can be regarded as the decomposition
field of the polynomial%
\begin{equation*}
P(T)=\prod_{i=1}^{n}(T-X_{i})\in S[T].
\end{equation*}%
Therefore $L$ is a normal extension of $S$. Since $L$ is of characteristic $0$, it is a Galois extension of $S$. Its Galois group
is $S_{n}$ so $L$ has degree $n!$ over $S$. The subgroup $S_{n-1}\subset
S_{n}$ is maximal. Therefore, if we set $K=L^{S_{n-1}}$, the invariant subfield under the group $S_{n-1}$, then $L$ is an extension
of $K$ of degree $n$ with no proper intermediate extension.
\end{remark}

\subsection{A refinement}

Even if the extension $K \subset L$ does not satisfy the linear matching
property, it is still possible to match some subspaces $A,B$ of $L$ under
suitable circumstances. Let $n_0(K,L)$ denote the smallest degree of an
intermediate field extension $K \subsetneq M \subset L$. Thus $n_0(K,L) \ge 2
$, and $n_0(K,L) = \infty$ if the extension is purely transcendental.
Slightly adapting the proof of Theorem~\ref{thm:matching property} yields
the following result.

\begin{theorem}
\label{refin} Let $K \subset L$ be a field extension, with $K$ commutative
and central in $L$. Let $A,B \subset L$ be $n$-dimensional subspaces of $L$,
with $1 \notin B$ and $n < n_0(K,L)$. Then $A$ is matched to $B$.
\end{theorem}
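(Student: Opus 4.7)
The plan is to follow the sufficiency argument of Theorem~\ref{thm:matching property}, with the hypothesis $n<n_0(K,L)$ now playing the role that the absence of proper intermediate extensions of $L$ played there. First I fix an arbitrary basis $\mathcal{A}=\{a_1,\ldots,a_n\}$ of $A$; by Proposition~\ref{thm:dim}, it suffices to prove the dimension inequalities $\dim V_J\leq n-|J|$ for every $J\subseteq\{1,\ldots,n\}$, where $V_J=\bigcap_{i\in J}(a_i^{-1}A\cap B)$. The case $J=\emptyset$ is trivial, so assume $J\neq\emptyset$. Set $A_J=\langle a_i\mid i\in J\rangle$, so that $\dim A_J=|J|$; the hypothesis $1\notin B$ forces $1\notin V_J$, hence $W_J:=K\oplus V_J$ is a genuine direct sum of $K$-subspaces with $\dim W_J=\dim V_J+1$, and $A_J W_J\subseteq A$ by construction.

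Next I apply Theorem~\ref{TH-Ol8lin} to the pair $(A_J,W_J)$. This produces an intermediate subfield $K\subseteq M\subseteq L$ and a nonzero $K$-subspace $T\subseteq\langle A_J W_J\rangle\subseteq A$ satisfying $\dim\langle A_J W_J\rangle\geq |J|+\dim V_J+1-\dim M$ and either $MT=T$ or $TM=T$. The crux is to verify $M=K$; this is precisely the point where the proof of Theorem~\ref{thm:matching property} invoked the global absence of proper intermediate extensions of $L$. Here I argue locally: since $T$ is closed under left (or right) multiplication by the field $M$, it is a nonzero (left or right) $M$-vector space, so $\dim_K T$ is a positive multiple of $\dim_K M$. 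Combined with $T\subseteq A$, this gives $\dim_K M\leq\dim_K T\leq n$. The hypothesis $n<n_0(K,L)$ then excludes any proper intermediate extension, and forces $M=K$.

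With $M=K$, the inequality from Theorem~\ref{TH-Ol8lin} becomes $\dim\langle A_J W_J\rangle\geq |J|+\dim V_J$, and combining this with $\langle A_J W_J\rangle\subseteq A$ delivers $|J|+\dim V_J\leq n$, i.e. the desired bound $\dim V_J\leq n-|J|$. By Proposition~\ref{thm:dim} the basis $\mathcal{A}$ matches some basis of $B$, and since $\mathcal{A}$ was arbitrary, $A$ is matched to $B$. The main obstacle is the step that forces $M=K$: the proof of Theorem~\ref{thm:matching property} relied on the global hypothesis that $L$ has no proper intermediate extension, while here only the local dimension bound $n<n_0(K,L)$ is available. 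The resolution is that $T$ sits inside the $n$-dimensional space $A$, so its $M$-vector space structure immediately caps $\dim_K M$ by $n$, and any proper intermediate extension of $L$ over $K$ would have $K$-dimension at least $n_0(K,L)>n$.
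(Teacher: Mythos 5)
Your proposal is correct and follows essentially the same route as the paper: verify the Hall-type dimension bounds of Proposition~\ref{thm:dim} via Theorem~\ref{TH-Ol8lin} applied to $A_J$ and $W_J=K\oplus V_J$, and force $M=K$ from the divisibility of $\dim_K T$ by $\dim_K M$ together with $T\subseteq A$ and $n<n_0(K,L)$. Your explicit remark that $T$ must be nonzero for the divisibility argument to cap $\dim_K M$ is a welcome extra precision, but the argument is the paper's.
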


\begin{proof}
Let $\mathcal{A}=\{a_1,\ldots, a_n\}$ be any basis of $A$. We proceed as in
the proof of Theorem~\ref{thm:matching property}, and use the same notation $%
V_J, A_J, W_J=K\oplus V_{J}$ for $J \subset \{1,\ldots,n\}$. In order to
ensure that $\mathcal{A}$ can be matched to a basis of $B$, it suffices to
check the condition 
\begin{equation}  \label{eq:cond3}
\dim V_J \; \le \; n-|J|
\end{equation}
for all $J \subset \{1,\ldots,n\}$. We have $A_J W_J \subset A$. By Theorem %
\ref{TH-Ol8lin} applied to $A_J$ and $W_J$, there is an intermediate field
extension $K\subset M \subset L$ and a subspace $T\subset \langle A_J W_J
\rangle$ such that 
\begin{equation}  \label{Hall6}
\dim \langle A_J W_J \rangle \ge \dim A_J+\dim W_{J}-\dim M,
\end{equation}
and $MT=T$ or $TM=T$. This means that $T$ is either a left of a right $M$%
-module, whence $\dim M$ divides $\dim T$. But since $T \subset \langle A_J
W_J \rangle \subset A$, it follows that $\dim M \le n$. Now, our assumption $%
n < n_0(K,L)$ implies $M = K$. Therefore, inequality (\ref{Hall6}) yields 
\begin{equation*}
\dim A\geq|J|+\dim V_{J}, 
\end{equation*}
since $\langle A_JW_{J}\rangle\subset A$, $\dim A_J = |J|$, $\dim W_{J}=\dim
V_{J}+1$ and $\dim M=1$. Thus (\ref{eq:cond3}) is satisfied, implying that $%
\mathcal{A}$ can be matched to a basis of $B$.
\end{proof}

\section{Strong matchings}

Here we turn to a related, but much stronger notion of matching between
subspaces. An existence criterion for such matchings is much easier to
establish, as we do now, independently of the preceding results.

Let $K\subset L$ be a field extension, and let $A,B$ be finite-dimensional $K
$-subspaces of $L$ distinct from $\{0\}$.\ 

\begin{definition}
A strong matching from $A$ to $B$ is a linear isomorphism $%
\varphi:A\rightarrow B$ such that any basis $\mathcal{A}$ of $A$ is matched
to the basis $\varphi(\mathcal{A})$ of $B$, in the sense of Definition~\ref%
{def matched}.
\end{definition}

We start with an easy equivalent reformulation of this notion, and then
proceed with the promised existence criterion for strong matchings.

\begin{lemma}
\label{Lem_strongmatch}Let $\varphi:A\rightarrow B$ be an isomorphism of $K$%
-vector spaces. The following two statements are equivalent.

\begin{enumerate}
\item The map $\varphi$ is a strong matching from $A$ to $B.$

\item For any $0 \not=a\in A$ and any subspace $H\subset A$ such that $%
A=\langle a \rangle\oplus H,$ we have $a^{-1}A \cap B\subset\varphi(H).$
\end{enumerate}
\end{lemma}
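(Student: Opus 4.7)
The plan is to prove the equivalence by a direct unfolding of definitions, exploiting the fact that in a basis $\{a_1,\dots,a_n\}$ of $A$, the hyperplane $\langle a_1,\dots,\widehat{a_i},\dots,a_n\rangle$ is precisely a complement to the line $\langle a_i\rangle$, and conversely every complement $H$ of a line $\langle a\rangle$ arises in this way from some basis of $A$ starting with $a$. Since $\varphi$ is a linear isomorphism, $\varphi$ sends complements to complements, and in particular
\[
\varphi\bigl(\langle a_1,\dots,\widehat{a_i},\dots,a_n\rangle\bigr) \;=\; \langle \varphi(a_1),\dots,\widehat{\varphi(a_i)},\dots,\varphi(a_n)\rangle,
\]
so the matching condition of Definition~\ref{def matched} for $\mathcal{A}$ and $\varphi(\mathcal{A})$ reads
\[
a_i^{-1}A\cap B \;\subset\; \varphi\bigl(\langle a_1,\dots,\widehat{a_i},\dots,a_n\rangle\bigr).
\]

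For the implication (1)$\Rightarrow$(2), I would fix a nonzero $a \in A$ and a complement $H$ with $A=\langle a\rangle\oplus H$, then choose any basis $\{h_2,\dots,h_n\}$ of $H$ and form the basis $\mathcal{A}=\{a,h_2,\dots,h_n\}$ of $A$. Applying the strong matching hypothesis to $\mathcal{A}$ at the index $i=1$ yields
\[
a^{-1}A\cap B \;\subset\; \langle \varphi(h_2),\dots,\varphi(h_n)\rangle \;=\; \varphi(H),
\]
which is exactly (2).

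For the converse (2)$\Rightarrow$(1), I would take an arbitrary basis $\mathcal{A}=\{a_1,\dots,a_n\}$ of $A$ and, for each $i$, set $H_i=\langle a_1,\dots,\widehat{a_i},\dots,a_n\rangle$, so that $A=\langle a_i\rangle\oplus H_i$. By assumption (2) applied to $a=a_i$ and $H=H_i$,
\[
a_i^{-1}A\cap B \;\subset\; \varphi(H_i) \;=\; \langle \varphi(a_1),\dots,\widehat{\varphi(a_i)},\dots,\varphi(a_n)\rangle,
\]
which verifies the matching condition (\ref{match}) at every index $i$, hence shows that $\mathcal{A}$ is matched to $\varphi(\mathcal{A})$. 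Since $\mathcal{A}$ was arbitrary, $\varphi$ is a strong matching.

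There is no real obstacle here: once one notices that the datum ``$\langle a\rangle\oplus H = A$'' is equivalent to ``$a$ extends to a basis whose remaining vectors span $H$,'' the two directions are symmetric. The only point to keep in mind is that $\varphi$, being a $K$-linear isomorphism, commutes with taking spans, so the image under $\varphi$ of a hyperplane complement to $\langle a_i\rangle$ is the hyperplane complement to $\langle \varphi(a_i)\rangle$ appearing in Definition~\ref{def matched}.
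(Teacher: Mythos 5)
Your proposal is correct and follows essentially the same argument as the paper: in one direction you extend $a$ together with a basis of $H$ to a basis of $A$ and apply the strong matching condition at the first index, and in the other you take $H_i$ to be the span of the remaining basis vectors and invoke statement~2. The preliminary remark that $\varphi$ carries the hyperplane $\langle a_1,\dots,\widehat{a_i},\dots,a_n\rangle$ onto $\langle \varphi(a_1),\dots,\widehat{\varphi(a_i)},\dots,\varphi(a_n)\rangle$ is exactly the identification the paper uses implicitly.
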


\begin{proof}
Assume $\varphi$ is a strong matching. Let $0 \not=a\in A$, and let $%
H\subset A$ be any $K$-subspace such that $A=\langle a \rangle\oplus H.$ Let 
$\{a_{2},\ldots,a_{n}\}$ be any basis of $H.$ Then $\mathcal{A}%
=\{a,a_{2},\ldots,a_{n}\}$ is a basis of $A.$ Hence $\mathcal{A}$ is matched
to $\varphi (\mathcal{A})=\{\varphi(a),\varphi(a_{2}),\ldots,\varphi(a_{n})%
\}.$ This implies that $a^{-1}A \cap B$ is a subspace of $\langle
\varphi(a_2), \ldots, \varphi(a_n) \rangle=\varphi(H).$

Conversely, assume statement 2 holds. Let $\mathcal{A}=\{a_{1},a_{2},%
\ldots,a_{n}\}$ be a basis of $A$. For any $i=1,\ldots,n,$ set $%
H_{i}=\langle a_1,\ldots, \widehat{a_i},\ldots, a_n \rangle$. Then we must
have $a_{i}^{-1}A \cap B\subset\langle \varphi(a_1),\ldots, \widehat{%
\varphi(a_i)},\ldots, \varphi(a_n) \rangle$. This proves that the basis $%
\mathcal{A}$ is matched to $\varphi (\mathcal{A)}$. Hence, the map $\varphi$
is a strong matching from $A$ to $B$.
\end{proof}

\begin{theorem}
Let $K\subset L$ be a field extension. Let $A,B$ be $n$-dimensional $K$%
-subspaces of $L$ distinct from $\{0\}$. There is a strong matching from $A$
to $B$ if and only if $AB\cap A=\{0\}.$ In this case, any isomorphism $%
\varphi:A\rightarrow B$ is a strong matching.
\end{theorem}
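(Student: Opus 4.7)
My plan is to reduce the theorem to Lemma~\ref{Lem_strongmatch}, whose second formulation (arbitrary complements $H$ of $\langle a\rangle$) is the most convenient handle on strong matchings. The first step is a translation: since $L$ is a (skew) field, the condition $AB\cap A=\{0\}$ is equivalent to the statement that $a^{-1}A\cap B=\{0\}$ for every nonzero $a\in A$. Once this reformulation is in place, both implications are driven directly by Lemma~\ref{Lem_strongmatch}.

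The direction $(\Leftarrow)$ is then essentially automatic. If $a^{-1}A\cap B=\{0\}$ for every $0\neq a\in A$, then the inclusion $a^{-1}A\cap B\subset\varphi(H)$ demanded by part~(2) of Lemma~\ref{Lem_strongmatch} holds trivially for every isomorphism $\varphi:A\to B$ and every complement $H$ of $\langle a\rangle$. This simultaneously establishes the existence of a strong matching \emph{and} the last sentence of the theorem.

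For $(\Rightarrow)$, I would argue by contradiction. Assume $\varphi:A\to B$ is a strong matching yet some nonzero $a\in A$ and $b\in B$ satisfy $ab\in A$, so that $b\in a^{-1}A\cap B$. Put $a'=\varphi^{-1}(b)\in A\setminus\{0\}$. By Lemma~\ref{Lem_strongmatch}, $a'$ must lie in \emph{every} complement $H$ of $\langle a\rangle$ in $A$, and the goal becomes to exhibit one such $H$ with $a'\notin H$. I would split into two cases. If $a'=\lambda a$ with $\lambda\in K\setminus\{0\}$, then no complement of $\langle a\rangle$ contains $a'=\lambda a$ (since none contains $a$), a contradiction. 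Otherwise $\{a,a'\}$ is $K$-linearly independent; extend it to a basis $\{a,a',e_{3},\dots,e_{n}\}$ of $A$ and take $H=\langle a+a',e_{3},\dots,e_{n}\rangle$. A brief check shows $A=\langle a\rangle\oplus H$ (since $a'=(a+a')-a$) while $a'\notin H$ (by linear independence of the chosen basis), again yielding a contradiction.

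The only subtle point is the explicit construction of $H$ in the linearly independent case: picking a complement that \emph{contains} $a'$ is easy and is what one reflexively does, whereas the argument requires a complement that \emph{excludes} $a'$. The choice $H=\langle a+a',e_{3},\dots,e_{n}\rangle$ is the minimal modification of the naive complement $\langle a',e_{3},\dots,e_{n}\rangle$ that makes $a'\notin H$ while retaining $A=\langle a\rangle\oplus H$; this is the small trick on which the whole implication hinges.
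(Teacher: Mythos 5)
Your proposal is correct and takes essentially the same route as the paper: both directions reduce to Lemma~\ref{Lem_strongmatch}, and the forward implication rests on the fact that the complements of $\langle a\rangle$ in $A$ have trivial intersection. The paper simply asserts this last fact and concludes $a^{-1}A\cap B\subset\varphi\bigl(\bigcap_H H\bigr)=\{0\}$, whereas you make it explicit by exhibiting, for $a'\notin\langle a\rangle$, the complement $\langle a+a',e_3,\dots,e_n\rangle$ excluding $a'$ — a harmless elaboration of the same argument.
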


\begin{proof}
Assume $\varphi: A \rightarrow B$ is a strong matching. By Lemma \ref%
{Lem_strongmatch}, we obtain for any $0 \not= a\in A$: 
\begin{equation*}
a^{-1}A \cap B \; \subset\; {\bigcap\limits_{H}} \varphi(H)\; =\;
\varphi\left({\bigcap\limits_{H}}\, H \right), 
\end{equation*}
where $H$ ranges over all subspaces of $A$ such that $A=\langle a
\rangle\oplus H$. But of course, the intersection of all such subspaces $H$
is reduced to $\{0\}$. Hence, we have $a^{-1}A \cap B=\{0\}$ for any $0
\not=a\in A$. This means that $AB\cap A=\{0\}.$

Conversely, assume $AB\cap A=\{0\}$, and let $\varphi:A\rightarrow B$ be any
isomorphism. Then, for all $0 \not= a \in A$, we have $a^{-1}A \cap B=\{0\}$%
, whence $a^{-1}A \cap B \subset \varphi(H)$ for any subspace $H \subset A$.
It follows from Lemma \ref{Lem_strongmatch} that $\varphi$ is a strong
matching from $A$ to $B$, as claimed.
\end{proof}

\bigskip
\noindent
\textbf{Acknowledgement.} 
We are most grateful to Professors Akbari and Aliabadi for having pointed out to us
the now corrected gap in our published version of Theorem~\ref{thm:matching property}.

\end{document}